\pgfplotsset{compat=1.5}
\newtheorem{lemma}{Lemma}
\newtheorem*{lemma_mono}{Lemma~\ref{lemma_mono}}
\newtheorem*{lemma_chi}{Lemma~\ref{lemma_chi}}
\newtheorem*{lemma_xn}{Lemma~\ref{lemma_xn}}
\title{A fast algorithm for quadratic resource allocation problems with nested constraints}
\author{Martijn H. H. Schoot Uiterkamp, Johann L. Hurink, Marco E. T. Gerards \\ University of Twente, Enschede, the Netherlands}
\begin{document}
\maketitle

\newcommand{\seq}{$\text{ALG}_{\text{seq}}$}
\newcommand{\infe}{$\text{ALG}_{\text{inf}}$}
\newcommand{\dec}{$\text{ALG}_{\text{dec}}$}
\begin{abstract}
We study the quadratic resource allocation problem and its variant with lower and upper constraints on nested sums of variables. This problem occurs in many applications, in particular battery scheduling within decentralized energy management (DEM) for smart grids. We present an algorithm for this problem that runs in $O(n \log n)$ time and, in contrast to existing algorithms for this problem, achieves this time complexity using relatively simple and easy-to-implement subroutines and data structures. This makes our algorithm very attractive for real-life adaptation and implementation. Numerical comparisons of our algorithm with a subroutine for battery scheduling within an existing tool for DEM research indicates that our algorithm significantly reduces the overall execution time of the DEM system, especially when the battery is expected to be completely full or empty multiple times in the optimal schedule. Moreover, computational experiments with synthetic data show that our algorithm outperforms the currently most efficient algorithm by more than one order of magnitude. In particular, our algorithm is able to solves all considered instances with up to one million variables in less than 17 seconds on a personal computer.
\end{abstract}

\section{Introduction}

\subsection{Resource allocation problems and energy management}

The resource allocation problem is a classical  and well-researched problem in the optimization and operations research literature. The objective of the resource allocation problem is to divide a fixed amount of resource (e.g., time, money, energy) over a set of activities while minimizing a given cost function (or maximizing a given utility function). In the most studied version of this problem, the cost functions are quadratic, which leads to the following formulation of the so-called quadratic resource allocation problem (QRAP):
\begin{align}
\text{QRAP:} \ \min_{x \in \mathbb{R}^n} \ & \sum_{i \in \mathcal{N}} \frac{1}{2} \frac{x_i^2}{a_i} \nonumber\\
\text{s.t. } & \sum_{i \in \mathcal{N}} x_i = R, \label{eq_QRAP_02}\\
& l_i \leq x_i \leq u_i, \quad i \in \mathcal{N}, \nonumber
\end{align}
where $a \in \mathbb{R}^n_{>0}$, $R \in \mathbb{R}$, $l,u \in \mathbb{R}^n$, and $\mathcal{N} := \lbrace 1,\ldots, n \rbrace$. The problem QRAP has been studied extensively over the last decades due to its wide applicability in, among others, engineering, finance, and machine learning (see also the surveys in \cite{Patriksson2008, Patriksson2015}). As a consequence, many efficient algorithms have been developed for this problem and its generalizations.

In this article, we study an extension of QRAP, namely the QRAP with lower and upper constraints on nested sums of variables (QRAP-NC). This problem can be formulated as follows:
\begin{align}
\text{QRAP-NC:}  \ \min_{x \in \mathbb{R}^n} \ & \sum_{i \in \mathcal{N}} \frac{1}{2} \frac{x_i^2}{a_i} \nonumber \\
\text{s.t. } & L_j \leq \sum_{i \in \mathcal{N}_j} x_i \leq U_j, \quad j \in \mathcal{N}_{n-1}, \nonumber \\
& \sum_{i \in \mathcal{N}} x_i = R, \nonumber \\
& l_i \leq x_i \leq u_i, \quad i \in \mathcal{N}, \label{RAP_NC_04}
\end{align}
where $\mathcal{N}_j := \lbrace 1,\ldots, j \rbrace$ for $j \in \mathcal{N} \backslash \lbrace n \rbrace$, $L,U \in \mathbb{R}^{n-1}$, and we define $L_n = U_n = R$ for convenience. Note that if $L_j = U_j$ for some $j \in \mathcal{N}_{n-1}$, we may split up the problem QRAP-NC into two smaller instances of QRAP-NC that involve the variables $x_1,\ldots,x_j$ and $x_{j+1},\ldots,x_n$ respectively. Thus, we assume without loss of generality that $L_j < U_j$ for all $j \in \mathcal{N}_{n-1}$. Moreover, we may assume that $L_1 = l_1$, $U_1 = u_1$, and $L_j \geq L_{j-1} + l_j$ and $U_j \leq U_{j-1} +  u_j$ for $j \in \mathcal{N}_{n-1} \backslash \lbrace 1 \rbrace$.

The problem QRAP-NC has numerous applications in, among others, machine learning, telecommunications, and speed optimization problems (see also the overviews in \cite{Akhil2018,Vidal2018}). Our particular motivation for studying QRAP-NC is its application in decentralized energy management (DEM) for smart distribution grids. In DEM, the goal is to optimize the joint energy consumption of multiple devices within, e.g., a neighborhood. In a DEM system, devices optimize their own consumption locally but this local optimization is coordinated to obtain certain global objectives (hence the term ``decentralized''). In the context of DEM, we are interested in optimization of storage devices such as electrical batters and heat buffers. Energy storage devices plays an important role in DEM systems since they are quite flexible in their energy usage and are thus suitable to compensate for peak consumption or production of energy in the distribution grid (see, e.g., \cite{Roberts2011,Lund2016,Zame2018}).

One important example of a device-level optimization problem within DEM is the scheduling of a battery within a neighborhood. We consider the situation where the charging and discharging of the battery has to be scheduled over a set $\mathcal{N}$ of equidistant time intervals, each of length $\Delta t$. Given the power profile $p := (p_i)_{i \in \mathcal{N}}$ of the neighborhood, the goal is to determine for each time interval $i \in \mathcal{N}$ the charging power $x_i$ of the battery during this interval so that the combined battery and neighborhood profile is flattened as much as possible. Aiming for this goal reduces the stress put on the grid and the risk of blackouts. The (physical) restrictions of the battery are given by a minimum and maximum charging rate $X_{\min}$ and $X_{\max}$ and a capacity $C$. Given the amount of energy present in the battery (the state-of-charge (SoC)) at the start and end of the scheduling horizon, denoted by $S_{\text{start}}$ and $S_{\text{end}}$, we can formulate the resulting device-level optimization problem as follows (see also \cite{vdKlauw2017}):
\begin{align*}
\text{BATTERY:}  \ \min_{x \in \mathbb{R}^n} \ & \sum_{i \in \mathcal{N}} (p_i + x_i)^2 \\
\text{s.t. } & 0 \leq S_{\text{start}} + \Delta t \sum_{i \in \mathcal{N}_j} x_i \leq D, \quad j \in \mathcal{N}_{n-1}, \\
& S_{\text{start}} + \Delta t \sum_{i \in \mathcal{N}} x_i = S_{\text{end}}, \\
& X_{\min} \leq x_i \leq X_{\max}, \quad i \in \mathcal{N}.
\end{align*}
Note that this is an instance of QRAP-NC by applying the variable transform $y := p + x$.

An important feature within the DEM paradigm is that device-level problems have to be solved locally. This means that the corresponding device-level optimization algorithms are executed on embedded systems with limited computational power (see, e.g., \cite{Beaudin2015}) that are located within, e.g., households. Since these algorithms are called multiple times with the DEM system as a subroutine, it is important that these algorithms are very efficient. Therefore, efficient and tailored device-level optimization algorithms are crucial ingredients for the real-life implementation of DEM systems. In particular, for the optimization of storage devices, this means that fast and tailored algorithms to solve QRAP-NC are crucial. For more background on DEM, we refer to \cite{Siano2014, Esther2016}.

\subsection{Background and contribution}

There is a rich literature on solution approaches for QRAP-NC with only \emph{upper} nested constraints on sums of variables,  i.e., only nested constraints of the form $\sum_{i \in \mathcal{N}_j} x_i \leq U_j$, $j \in \mathcal{N}_{n-1}$ are given. This case has been studied mainly in the context of convex optimization over submodular constraints (see, e.g., \cite{Hochbaum1994,Hochbaum1995,Vidal2016}). However, the literature on the general case of QRAP-NC is limited. The authors in \cite{vdKlauw2017} propose an infeasibility-guided divide-and-conquer algorithm, to which we shall refer in this article as \infe. This algorithm solves a relaxation of the problem where the nested constraints are ignored and, subsequently, splits up the problem into two smaller instances of QRAP-NC at the variable for which the lower or upper nested constraint is violated most in the solution to the relaxation. The worst-case time complexity of this algorithm is $O(n^2)$. Furthermore, \cite{Vidal2018} proposes a decomposition-based algorithm, hereafter referred to as \dec, that solves QRAP-NC in $O(n \log n)$ time. This algorithm decomposes QRAP-NC into a hierarchy of QRAP subproblems whose single-variable bounds are optimal solutions to QRAP subproblems further down in the hierarchy. Currently, this is the most efficient algorithm for QRAP-NC.

As mentioned before, we are interested in algorithms for QRAP-NC that are fast in practice. Although the decomposition-based algorithm \dec\ has a good worst-case time complexity, we observe several disadvantages of this approach that may make it less favorable in practice than its worst-case time complexity suggests:
\begin{enumerate}
\item
Each level of recursion within \dec\ solves a series of instances of QRAP whose parameters are determined by optimal solutions to multiple instances of QRAP on earlier levels. Since each instance is solved from scratch, much time is spent on initializing the subproblems.

\item
\dec\ achieves for each level of recursion an $O(n)$ time complexity by solving the QRAP subproblems using an $O(n)$ time algorithm such as the ones in \cite{Kiwiel2008a}. These $O(n)$ time algorithms repeatedly call linear-time algorithms such as \cite{Blum1973} to find the median of a set. However, these median-find algorithms are relatively slow in practice due to a big constant factor in their complexity \cite{Blum1973}. Moreover, they are significantly more difficult to implement than simple sorting or sampling-based strategies \cite{Kiwiel2005,Alexandrescu2017}.
\end{enumerate}

To alleviate these issues, we propose in this article a new algorithm for QRAP-NC, called \seq, which has the same time complexity as \dec, namely $O(n \log n)$, but in contrast requires only relatively simple and fast subroutines to attain this complexity. As a consequence, this algorithm is both faster in practice and easier to implement than \dec. These are generally more important criteria for the actual adaptation of a given algorithm than the polynomial worst-case time complexity \cite{Muller-Hannemann2010}. Our algorithm builds upon the monotonicity results for QRAP-NC derived in \cite{Vidal2018} and solves a sequence of QRAP subproblems that have a sequential nested structure rather than the divide-and-conquer structure of both \dec\ and \infe. More precisely, for each $j \in \mathcal{N}$, the $j^{\text{th}}$ subproblem involves only the first $j$ variables $x_1, \ldots, x_j$. As a consequence, our approach can solve its first $j$ subproblems without any knowledge on the parameters involving indices higher than~$j$, whereas both \infe\ and \dec\ require all problem parameters to be known a priori. This makes our algorithm particularly useful in situations where problem parameters arrive over time. This is, e.g., the case when each variable denotes a decision for a specific time slot and all parameters related to this time slot become available only during or at the start of this time slot. Moreover, due to the nested structure, each input and bookkeeping parameter is accessed within a relatively small time period instead of frequently throughout the entire course of the algorithm. This is beneficial for caching since this increases the number of times a value can be accessed quickly from a cache instead of relatively slowly from the main memory.

We attain the $O(n \log n)$ complexity using an efficient implementation of double-ended priority queues \cite{Knuth1998,Brass2008} for several bookkeeping parameters. This data type supports insertion of arbitrary elements and finding and deletion of minimum and maximum elements in at most $O(\log n)$ time. Our approach requires $O(n)$ of such operations, which leads to an overall time complexity of $O(n \log n)$. Double-ended priority queues can be implemented using specialized data structures such as min-max heaps \cite{Atkinson1986} or by a simple coupling of a standard min-heap and max-heap (see also \cite{Brass2008}). The latter heaps are one of the most basic data structures and many efficient implementation exist for different programming languages \cite{Brodal2013}. Thus, we can achieve the time complexity of $O(n \log n)$ using relatively simple data structures, as opposed to \dec, where a more involved implementation of a linear-time median algorithm is required.

Our algorithm for QRAP-NC also leads to efficient and fast algorithms for instances of QRAP-NC where we replace each term $\frac{1}{2} \frac{x_i^2}{a_i}$ by $ a_i f(\frac{x_i}{a_i})$ for each $i \in \mathcal{N}$ with a given convex function $f$. Such a structure is present in many applications considered in the literature, in particular in most of the applications surveyed or evaluated in \cite{Akhil2018,Vidal2018}. We obtain such efficient algorithms by a reduction result in \cite{SchootUiterkamp2020a}, which states that any optimal solution to an instance of QRAP-NC is also optimal for this instance when we take as objective function $\sum_{i \in \mathcal{N}} a_i f(\frac{x_i}{a_i})$. As a consequence, our algorithm solves also such problems in $O(n \log n)$ time. This leads to faster algorithms for a wide range of practical problems, including the vessel speed optimization problem \cite{Norstad2011,Hvattum2013} and processor scheduling with agreeable deadlines \cite{Huang2009,Gerards2014}.

We evaluate the performance of our algorithm \seq\ and compare it to the state-of-the-art algorithms \infe\ and \dec. For this evaluation, we use both synthetic instances and realistic instances of the battery scheduling problem BATTERY using real power consumption data as input. With regard to the realistic instances, we compare our approach to a tailored implementation of \infe\ using DEMKit, an existing simulation tool for DEM research \cite{Hoogsteen2019}. Within DEMKit, the battery scheduling problem is used as a subroutine within a distributed optimization framework that coordinates the energy consumption of multiple devices \cite{Gerards2015}. Our results indicate that the number of tight nested constraints in an optimal solution greatly influences which algorithm is faster for a given problem instance. In particular, \seq\ is on average faster than \infe, except when the percentage of tight nested constraints is relatively low (less than 2\%). Moreover, the execution time of \seq\ is more stable than that of \infe, which makes our algorithm more suitable for use in DEM systems that employ a high level of parallelism (see, e.g., \cite{Hoogsteen2018}). With regard to the synthetic instances, we study the scalability of \seq, \infe, and \dec. Our results indicate that both our algorithm \seq\ and \infe\ are at least one order of magnitude faster than \dec\ and that \seq\ is on average almost twice as fast as \infe. In particular, \seq\ solves instances with up to one million variables in less than 17 seconds. 

The outline of the remainder of this article is as follows. In Section~\ref{sec_BP_search}, we present a simple procedure to solve QRAP, which forms an important ingredient for our eventual approach for solving QRAP-NC. In Section~\ref{sec_seq}, we present an initial sequential algorithm \seq\ for solving QRAP-NC with an $O(n^2)$ worst-case time complexity. Based on this algorithm, we derive in Section~\ref{sec_quad} an $O(n \log n)$ time algorithm for this problem. In Section~\ref{sec_eval}, we evaluate the performance of this algorithm and compare it to the state-of-the-art. Finally, we provide our conclusions in Section~\ref{sec_concl}.

\section{A breakpoint search algorithm for QRAP}
\label{sec_BP_search}
In this section, we discuss a simple approach to solve QRAP that belongs to the class of so-called \emph{breakpoint search} methods \cite{Kiwiel2008a,Patriksson2015} that structurally search for the optimal Lagrange multiplier corresponding to the resource constraint~(\ref{eq_QRAP_02}). This approach forms an important ingredient of our $O(n \log n)$ time algorithm for QRAP-NC in Section~\ref{sec_quad}.

We start by considering the Lagrangian relaxation of QRAP:
\begin{align*}
\text{QRAP}[\delta]  : \ \min_{x \in \mathbb{R}^n} \ & \sum_{i \in \mathcal{N}} \left( \frac{1}{2} \frac{x_i^2}{a_i} -\delta x_i \right) \\
\text{s.t. } &  l_i \leq x_i \leq u_i, \quad i \in \mathcal{N},
\end{align*}
where $\delta \in \mathbb{R}$ is the Lagrange multiplier corresponding to the resource constraint~(\ref{eq_QRAP_02}). We denote the optimal solution to this problem by ${x} [\delta] := ({x}_i[\delta])_{i \in \mathcal{N}}$. Since the objective function of this problem is separable, the optimal solution to QRAP$[\delta]$ is given by
\begin{equation}
{x}_i [\delta] = \begin{cases}
l_i & \text{if } \delta < \frac{l_i}{a_i}, \\
a_i \delta & \text{if } \frac{l_i}{a_i} \leq \delta < \frac{u_i}{a_i}, \\
u_i & \text{if } \frac{u_i}{a_i} \leq \delta.
\end{cases}
\label{eq_opt_QRA}
\end{equation}
Observe that ${x}_i[\delta]$ is a continuous piecewise linear non-decreasing function of $\delta$. More precisely, ${x}_i[\delta]$ is constant for $\delta \leq \frac{l_i}{a_i}$, linear with slope $a_i$ for $\delta \in [\frac{l_i}{a_i} , \frac{u_i}{a_i} ]$, and again constant for $\delta \geq \frac{u_i}{a_i}$ (see also Figure~\ref{plot_BP}). For each $i \in \mathcal{N}$, we call the points where ${x}_i[\delta]$ has ``kinks'', i.e., where ${x}_i[\delta]$ is non-differentiable, the \emph{breakpoints} of ${x}_i[\delta]$. We denote these breakpoints for $i \in \mathcal{N}$ by $\alpha_i$ and $\beta_i$ respectively, i.e., $\alpha_i := \frac{l_i}{a_i}$ and $\beta_i := \frac{u_i}{a_i}$, where we refer to $\alpha_i$ as the \emph{lower} breakpoint of ${x}_i[\delta]$ and to $\beta_i$ as the \emph{upper} breakpoint of ${x}_i[\delta]$. We denote the multiset of lower breakpoints by $\mathcal{A} := \lbrace \alpha_i \ | \ i \in \mathcal{N} \rbrace$ and the multiset of upper breakpoints by $\mathcal{B} := \lbrace \beta_i \ | \ i \in \mathcal{N} \rbrace$. The reason for defining $\mathcal{A}$ and $\mathcal{B}$ as \emph{multi}sets is so that we can readily associate each breakpoint value in the set with one index in $\mathcal{N}$.

\begin{figure}[ht!]
\centering
\includegraphics{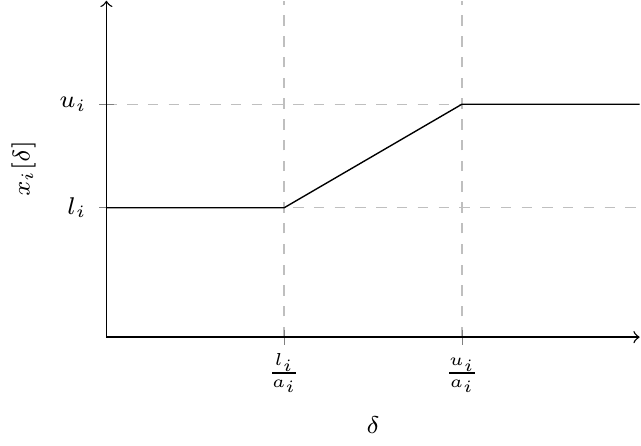}
\caption{The function ${x}_i[\delta]$ for a given $i \in \mathcal{N}$. The slope of the line segment for $\delta \in [\frac{l_i}{a_i},\frac{u_i}{a_i}]$ is $a_i$.}
\label{plot_BP}
\end{figure}

Note that also the sum ${z}[\delta] := \sum_{i \in \mathcal{N}} {x}_i [\delta]$ is continuous, piecewise linear, and non-decreasing. Moreover, it has $2n$ breakpoints, namely those of all terms ${x}_i[\delta]$. Thus, the multiset of breakpoints of ${z}[\delta]$ is given by $\mathcal{A} \cup \mathcal{B}$. Feasibility of the original problem QRAP implies that there exists a value $\bar{\delta}$ for the Lagrange multiplier $\delta$ such that ${z}[\bar{\delta}] = R$, meaning that ${x}[\bar{\delta}]$ is optimal not only for QRAP$(\bar{\delta})$ but also for the original problem QRAP. Note that this multiplier is not necessarily unique: in general, there may exist an interval $I \subset \mathbb{R}$ such that $\delta \in I$ implies $z[\delta] = R$.

Our approach to find the value $\bar{\delta}$ consists two steps. First, we aim to find two consecutive breakpoints $\delta_1$ and $\delta_2$ such that $\delta_1 \leq \bar{\delta} \leq \delta_2$. Since ${z}$ is non-decreasing, this is equivalent to finding two consecutive breakpoints $\delta_1$ and $\delta_2$ such that ${z}[\delta_1] \leq R \leq {z}[\delta_2]$. For this, we may consider all breakpoints in $\mathcal{A} \cup \mathcal{B}$ in non-decreasing order until we have found the first, i.e., smallest, breakpoint $\delta$ such that $\bar{\delta} < \delta$. In detail, for each candidate breakpoint $\delta$, we compute ${z}[\delta]$ and if ${z}[\delta] > R$, we set $\delta_2:= \delta$ and $\delta_1$ as the previously considered breakpoint. To compute ${z}[\delta]$ efficiently, we keep track of the sums 
\begin{equation*}
P(\delta) := \sum_{i: \ \delta < \frac{l_i}{a_i}} l_i + \sum_{i: \ \delta \geq \frac{u_i}{a_i}} u_i,
\quad
\quad
Q(\delta) := \sum_{i: \ \frac{l_i}{a_i} \leq \delta < \frac{u_i}{a_i}} a_i
\end{equation*}
and update these values each time a new breakpoint has been considered (see Table~\ref{tab_rel_01}).

\begin{table}[ht!]
\centering
\begin{tabular}{r | l l}
\toprule
Type of $\delta$ & Update $P(\delta)$ & Update $Q(\delta)$ \\
\midrule
$\delta \equiv \alpha_i$ & $P(\delta) - l_i$ & $Q(\delta) + a_i$ \\
$\delta \equiv \beta_i$ & $P(\delta) + u_i$ & $Q(\delta) - a_i$ \\
\bottomrule
\end{tabular}
\caption{Updating the bookkeeping sums $P(\delta)$ and $Q(\delta)$ when searching the breakpoints in non-decreasing order.}
\label{tab_rel_01}
\end{table}

In a second step, given the consecutive breakpoints $\delta_1$ and $\delta_2$ with $\bar{\delta} \in [\delta_1,\delta_2]$, we determine $\bar{\delta}$ and ${x}[\bar{\delta}]$. Note that, since ${x}[\delta]$ is non-decreasing, we have for each $i \in \mathcal{N}$:
\begin{itemize}
\item
${x}_i[\bar{\delta}] = l_i$ if and only if ${x}_i(\delta_2) = l_i $, and
\item
${x}_i[\bar{\delta}] = u_i$ if and only if ${x}_i(\delta_1) = u_i$.
\end{itemize}
Thus, given $\delta_1$ and $\delta_2$, we know whether a given variable ${x}_i[\bar{\delta}]$ equals its lower bound $l_i$, its upper bound $u_i$, or is strictly in between these bounds. To find ${x}_i[\bar{\delta}]$ for those variables that are strictly in between their bounds, note that, by definition of ${x}[\delta]$,
\begin{equation*}
R = {z}[\bar{\delta}]
= \sum_{i: \ {x}_i[\bar{\delta}] = l_i} l_i
+ \sum_{i: \ l_i < {x}_i[\bar{\delta}] < u_i} a_i \bar{\delta}
+ \sum_{i: \ {x}_i[\bar{\delta}] = u_i} u_i.
\end{equation*}
It follows that
\begin{equation*}
\bar{\delta} = \frac{R - \sum_{i: \ {x}_i[\bar{\delta}] = l_i} l_i - \sum_{i: \ {x}_i[\bar{\delta}] = u_i} u_i}{\sum_{i: \ l_i < {x}_i[\bar{\delta}] < u_i} a_i },
\end{equation*}
from which we can directly compute ${x}_i[\bar{\delta}]$ by ${x}_i[\bar{\delta}] = a_i \bar{\delta}$.

Algorithm~\ref{alg_QRAP_01} summarizes the sketched approach. To efficiently compute the minimum breakpoint $\delta_k$, we can implement the multisets $\mathcal{A}$ and $\mathcal{B}$ as sorted lists. As a consequence, each iteration of the algorithm takes $O(1)$ time. Since the maximum number of iterations is $2n$ (one for each breakpoint), the overall complexity of this approach is $O(n \log n)$ due to the initial sorting of the breakpoints. If this sorting is given (for example if the breakpoints have already been sorted in a previous run of the algorithm), the time complexity of the algorithm reduces to $O(n)$. 

\begin{algorithm}[ht!]
\caption{An $O(n \log n)$ time algorithm for QRAP.}
\label{alg_QRAP_01}
\begin{algorithmic}[1]
\STATE{\textbf{Input:} Parameters $a \in \mathbb{R}^n_{>0}$, $R \in \mathbb{R}$, and $l,u \in \mathbb{R}^n$}
\STATE{\textbf{Output:} Optimal solution ${x}$ to QRAP}
\STATE{Compute the breakpoint multisets $\mathcal{A}$ and $\mathcal{B}$}
\STATE{Initialize $P := \sum_{i=1}^n l_i$; $Q := 0$}
\REPEAT
\STATE{Determine smallest breakpoint $\delta_i := \min(\mathcal{A} \cup \mathcal{B})$}
\IF{$P+Q \delta_i  = C$}
\STATE{$\delta_i = \bar{\delta}$; compute ${x}[\bar{\delta}]$ using Equation~(\ref{eq_opt_QRA})}
\RETURN
\ELSIF{$P + Q\delta_i  > C$}
\STATE{($\bar{\delta} < \delta_i$): $\bar{\delta} = \frac{C - P}{Q}$; compute ${x}[\bar{\delta}]$ using Equation~(\ref{eq_opt_QRA})}
\RETURN
\ELSE
\IF{$\delta_i$ is lower breakpoint ($\delta_i = \alpha_i$)}
\STATE{$P := P - l_i$; $Q := Q + a_i$}
\STATE{$\mathcal{A} := \mathcal{A} \backslash \lbrace \alpha_i \rbrace$}
\ELSE
\STATE{$P := P + u_i$; $Q := Q - a_i$}
\STATE{$\mathcal{B} := \mathcal{B} \backslash \lbrace \beta_i \rbrace$}
\ENDIF
\ENDIF
\UNTIL{multiplier $\bar{\delta}$ has been found}
\RETURN{Optimal solution $\bar{x} := {x}[\bar{\delta}]$}
\end{algorithmic}
\end{algorithm}

We conclude this subsection with two observations that are crucial for the efficiency of our algorithm for QRAP-NC presented in the following section:
\begin{enumerate}
\item
Instead of searching the breakpoints in non-decreasing order, we may also search them in \emph{non-increasing order} and continue the search until we find the first, i.e., largest breakpoint $\delta_1$ such that $\delta_1 < \bar{\delta}$.
\item
Solving two instances of QRAP that differ only in the value of~$R$ in the resource constraint~(\ref{eq_QRAP_02}) can be done simultaneously in one run of Algorithm~\ref{alg_QRAP_01}. This is because the multisets of the breakpoints for these two instances of QRAP are the same. Thus, we can modify Algorithm~\ref{alg_QRAP_01} such that it continues the breakpoint search after the optimal multiplier for the smallest of the given values of~$R$ has been found. Note that, essentially, the optimal multiplier for a given value~$R$ serves as the starting candidate for the optimal multiplier for instances with a higher value of~$R$. This is in fact one of the two crucial observations for our approach for solving the QRAP subproblems, which we discuss further in Section~\ref{sec_opt_mult}.
\end{enumerate}

\section{An initial sequential algorithm for QRAP-NC}
\label{sec_seq}

In this section, we present our initial sequential algorithm for the problem QRAP-NC. This algorithm solves the problem as a sequence of $2n - 1$ instances of QRAP whose single-variable bounds~(\ref{RAP_NC_04}) are optimal solutions to previous QRAP subproblems. For this, we consider a sequence of restricted subproblems where we take into account only a subset of the variables. More precisely, we define for each $j \in \mathcal{N}$ and $C \in \mathbb{R}$ the following subproblem:
\begin{align}
\text{QRAP-NC}^j(C) \ : \ \min_{x \in \mathbb{R}^{j}} \ & \sum_{i \in \mathcal{N}_j} \frac{1}{2} \frac{x_i^2}{a_i} \nonumber \\
\text{s.t. } & \sum_{i \in \mathcal{N}_j} x_i = C, \label{sub_02} \\
& L_k \leq \sum_{i \in \mathcal{N}_k} x_i \leq U_k, \quad k \in \mathcal{N}_{j-1}, \label{sub_03}\\
&l_i \leq x_i \leq u_i, \quad i \in \mathcal{N}_j . \nonumber
\end{align}
Throughout this article, we denote the optimal solution to this subproblem by ${x}^{j}(C) := ({x}^{j}_i(C))_{i \in \mathcal{N}_j}$, where we use the brackets $( \cdot )$ instead of $[\cdot]$ to emphasize the distinction of this solution from an optimal solution ${x}[\delta]$ of the Lagrangian relaxation QRAP$(\delta)$ of QRAP. Note that this optimal solution is unique since the objective function of the corresponding problem is \emph{strictly} convex and all constraints are linear. Moreover, observe that the $n^{\text{th}}$ subproblem QRAP-NC$^n(R)$ is equal to the original problem QRAP-NC.

The key ingredient to our algorithm is that we can replace the nested constraints~(\ref{sub_03}) by specific single-variable constraints without changing the optimal solution. By doing this, we transform an instance of QRAP-NC into an equivalent instance of QRAP. More precisely, we show that each subproblem QRAP-NC$^j(C)$ yields the same optimal solution as the following instance of QRAP:
\begin{align}
\text{QRAP}^j(C) \ : \ 
\min_{x \in \mathbb{R}^{j}}
&\sum_{i \in \mathcal{N}_j} \frac{1}{2} \frac{x_i^2}{a_i} \nonumber \\
\text{s.t. } & \sum_{i \in \mathcal{N}_j} x_i = C, \label{sub_alt_02} \\
& {x}_i^{j-1}(L_{j-1}) \leq x_i \leq {x}_i^{j-1}(U_{j-1}), \quad i \in \mathcal{N}_{j-1}, \label{sub_alt_03} \\
&l_j \leq x_j \leq u_j, \label{sub_alt_04}
\end{align} 
where the bounds ${x}^{j-1}(L_{j-1})$ and ${x}^{j-1}(U_{j-1})$ in (\ref{sub_alt_03}) are the optimal solutions of the problems QRAP-NC$^{j-1}(L_{j-1})$ and QRAP-NC$^{j-1}(U_{j-1})$ respectively. Note that the single-variable bounds for $x_j$ in (\ref{sub_alt_04}) are the same as those of the original subproblem QRAP-NC$^j(C)$.

The validity of this transformation is proven by Lemmas~\ref{lemma_mono}-\ref{lemma_add}. First, Lemma~\ref{lemma_mono} shows that the optimal solution ${x}^j(C)$ to the subproblem QRAP-NC$^j(C)$ is non-decreasing in $C$. Subsequently, Lemma~\ref{lemma_bound} uses this property to show that when adding the alternative single-variable bounds~(\ref{sub_alt_03}) to the problem formulation of QRAP-NC$^j(C)$, the optimal solution ${x}^j(C)$ to QRAP-NC$^j(C)$ is not cut off. Finally, Lemma~\ref{lemma_add} shows that the alternative single-variable bounds~(\ref{sub_alt_03}) are stronger than the nested constraints~(\ref{sub_03}).

\begin{lemma}
If $L_j \leq A \leq B \leq U_j$, we have ${x}^j(A) \leq {x}^j(B)$ for a given $j \in \mathcal{N}$.
\label{lemma_mono}
\end{lemma}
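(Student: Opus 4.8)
The plan is to prove the statement by induction on $j$, reducing each inductive step to a monotonicity statement for a one-dimensional (two-block) problem. The base case $j=1$ is immediate: by the normalization $L_1=l_1$ and $U_1=u_1$, the equality constraint forces ${x}^1(C)=(C)$, which is non-decreasing in $C$. For the inductive step I would fix $j\geq 2$, assume that ${x}^{j-1}(\cdot)$ is non-decreasing (componentwise) on $[L_{j-1},U_{j-1}]$, and exploit the recursive structure of the subproblems. The idea is to treat the head-sum $S:=\sum_{i\in\mathcal{N}_{j-1}} x_i$ as a single aggregated variable. For any fixed value $S\in[L_{j-1},U_{j-1}]$, the nested and box constraints on $x_1,\ldots,x_{j-1}$ in QRAP-NC$^j(C)$ coincide exactly with those of QRAP-NC$^{j-1}(S)$, so the first $j-1$ components of any optimizer are ${x}^{j-1}(S)$. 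Writing $V_{j-1}(S):=\sum_{i\in\mathcal{N}_{j-1}}\frac12 ({x}_i^{j-1}(S))^2/a_i$ for the optimal value function, QRAP-NC$^j(C)$ becomes the scalar problem of minimizing $V_{j-1}(S)+\frac{1}{2a_j}(C-S)^2$ over $S$ subject to $S\in[L_{j-1},U_{j-1}]$ and $C-S\in[l_j,u_j]$.

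Next I would record two facts about this scalar problem. First, $V_{j-1}$ is convex on $[L_{j-1},U_{j-1}]$: for $S_1,S_2$ and $\theta\in[0,1]$, the point $\theta {x}^{j-1}(S_1)+(1-\theta){x}^{j-1}(S_2)$ is feasible for QRAP-NC$^{j-1}(\theta S_1+(1-\theta)S_2)$, and convexity of the objective yields $V_{j-1}(\theta S_1+(1-\theta)S_2)\leq \theta V_{j-1}(S_1)+(1-\theta)V_{j-1}(S_2)$. Second, the term $\frac{1}{2a_j}(C-S)^2$ is strictly convex in $S$, so the scalar objective has a unique minimizer. Let $\hat{S}(C)$ denote its minimizer over $[L_{j-1},U_{j-1}]$ alone (ignoring the $x_j$-box). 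Since a convex function of one variable is unimodal, the true minimizer $S(C)$ over the full feasible interval is obtained by clipping $\hat{S}(C)$ into $[C-u_j,C-l_j]$; that is, $S(C)=\mathrm{med}\big(\max(L_{j-1},C-u_j),\,\hat{S}(C),\,\min(U_{j-1},C-l_j)\big)$, where $\mathrm{med}$ denotes the middle of three values.

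The crux is a two-sided bound on $\hat{S}$. Optimality of $\hat{S}(C)$ gives $\tfrac{1}{a_j}\big(C-\hat{S}(C)\big)\in \partial V_{j-1}(\hat{S}(C))$, where $\partial V_{j-1}$ is the subdifferential of $V_{j-1}$ augmented by the normal cone of $[L_{j-1},U_{j-1}]$. For $C_1<C_2$, monotonicity of this subdifferential applied at $\hat{S}_1:=\hat{S}(C_1)$ and $\hat{S}_2:=\hat{S}(C_2)$ yields, with $d:=\hat{S}_2-\hat{S}_1$, the inequality $(C_2-C_1-d)\,d\geq 0$, hence $0\leq d\leq C_2-C_1$. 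Thus both $\hat{S}(\cdot)$ and $C-\hat{S}(\cdot)$ are non-decreasing. The clipping endpoints $\max(L_{j-1},C-u_j)$ and $\min(U_{j-1},C-l_j)$ are non-decreasing in $C$, and the median of non-decreasing functions is non-decreasing, so $S(C)$ is non-decreasing; moreover ${x}_j^j(C)=C-S(C)=\mathrm{med}\big(\max(C-U_{j-1},l_j),\,C-\hat{S}(C),\,\min(C-L_{j-1},u_j)\big)$ is likewise a median of non-decreasing functions, hence non-decreasing. Combining this with the induction hypothesis finishes the step: for $L_j\leq A\leq B\leq U_j$ we have $L_{j-1}\leq S(A)\leq S(B)\leq U_{j-1}$, so the first $j-1$ components satisfy ${x}^{j-1}(S(A))\leq {x}^{j-1}(S(B))$, while ${x}_j^j(A)=A-S(A)\leq B-S(B)={x}_j^j(B)$.

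The step I expect to be the main obstacle is precisely obtaining the \emph{componentwise} conclusion rather than merely the ordering of prefix sums. A naive lattice/exchange argument (replacing the two optimal prefix-sum paths by their pointwise minimum and maximum) only shows $\sum_{i\in\mathcal{N}_k}{x}_i^j(A)\leq\sum_{i\in\mathcal{N}_k}{x}_i^j(B)$ for all $k$, which is strictly weaker than what is claimed. Controlling the aggregated head $S$ and the single tail variable $x_j$ \emph{simultaneously} is what forces the two-sided nonexpansive bound $0\leq \hat{S}(C_2)-\hat{S}(C_1)\leq C_2-C_1$; the lower bound gives monotonicity of $S$ and the upper bound gives monotonicity of $x_j=C-S$. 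The delicate points in making this rigorous are establishing convexity of the value function $V_{j-1}$ and arguing through its subdifferential (so that no differentiability or strict convexity of $V_{j-1}$ is needed), together with the correct handling of the $x_j$-box via the median representation so that feasibility and the clipping identity hold throughout $[L_j,U_j]$.
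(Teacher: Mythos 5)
Your proposal is correct, but it takes a genuinely different route from the paper's own proof. The paper proves the lemma directly for arbitrary $j$, with no induction: it writes down the KKT system for QRAP-NC$^j(C)$, supposes ${x}_s^j(A) > {x}_s^j(B)$ for some index $s$, selects auxiliary indices $r \leq s \leq t$ via prefix-sum comparisons, extracts sign information on the multipliers from complementary slackness, and chains these facts into a cyclic string of inequalities that ends in a contradiction (an argument adapted from Theorem~2 of \cite{Vidal2018}). You instead induct on $j$ and reduce the inductive step to a scalar problem: aggregate the head-sum $S$, note that for fixed $S$ the first $j-1$ components of any optimizer must equal ${x}^{j-1}(S)$, and analyze $\min_S \lbrace V_{j-1}(S) + \tfrac{1}{2a_j}(C-S)^2 \rbrace$. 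The two pillars of your argument --- convexity of the value function $V_{j-1}$ and monotonicity of its subdifferential, which give the two-sided bound $0 \leq \hat{S}(C_2)-\hat{S}(C_1) \leq C_2 - C_1$ --- are standard and correctly deployed, and the median/clipping representation handles the box on $x_j$ properly; note that your argument implicitly relies on the paper's standing normalizations ($L_1 = l_1$, $U_1 = u_1$, $L_j \geq L_{j-1}+l_j$, $U_j \leq U_{j-1}+u_j$) to guarantee that the clipped interval is nonempty and that $V_{j-1}$ is finite on all of $[L_{j-1},U_{j-1}]$, so it is worth stating that dependence explicitly. As for what each approach buys: yours avoids all multiplier bookkeeping, mirrors the sequential head/tail decomposition that the paper's algorithm itself exploits, and delivers as a byproduct the nonexpansiveness of the optimal head-sum (both $S(C)$ and $C - S(C)$ non-decreasing), a stronger and reusable fact; the paper's KKT argument is one-shot for every $j$ and every component simultaneously, needs no value-function machinery or feasibility discussion for intermediate values of $S$, and extends verbatim to the separable-convex variant the authors mention in their closing remarks.
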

\begin{proof}
This proof is based on the proof of Theorem~2 in \cite{Vidal2018} and given in Appendix~\ref{pf_mono}.
\end{proof}

\begin{lemma}

For a given $j \in \mathcal{N}_{n-1}$ and $C \in [L_j , U_j]$, we have that ${x}^j_i (L_j) \leq {x}_i^{j+1}(C) \leq {x}_i^j (U_j)$.
\label{lemma_bound}
\end{lemma}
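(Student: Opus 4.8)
The plan is to show that the restriction of $x^{j+1}(C)$ to its first $j$ components is itself the optimal solution of a subproblem QRAP-NC$^j(S)$ for some $S \in [L_j, U_j]$, and then to sandwich it between $x^j(L_j)$ and $x^j(U_j)$ by invoking the monotonicity of Lemma~\ref{lemma_mono}.

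First I would set $S := \sum_{i \in \mathcal{N}_j} x^{j+1}_i(C)$, the partial sum of the first $j$ components of the optimal solution to QRAP-NC$^{j+1}(C)$. Since this solution is feasible for QRAP-NC$^{j+1}(C)$, the nested constraint~(\ref{sub_03}) for index $k = j$ (which is present because $\mathcal{N}_{(j+1)-1} = \mathcal{N}_j$) immediately yields $L_j \leq S \leq U_j$.

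The key step is to argue that the truncated vector $(x^{j+1}_i(C))_{i \in \mathcal{N}_j}$ is optimal for QRAP-NC$^j(S)$. Both directions of feasibility are routine: the truncation of $x^{j+1}(C)$ satisfies all constraints of QRAP-NC$^j(S)$ (the nested constraints for $k \in \mathcal{N}_{j-1}$, the box constraints, and the sum constraint $\sum_{i \in \mathcal{N}_j} x_i = S$ by the definition of $S$), and conversely any feasible $y$ for QRAP-NC$^j(S)$ extends to a feasible point $(y, x^{j+1}_{j+1}(C))$ of QRAP-NC$^{j+1}(C)$, because its total sum equals $S + x^{j+1}_{j+1}(C) = C$, the nested constraint for $k = j$ holds as $\sum_{i \in \mathcal{N}_j} y_i = S \in [L_j, U_j]$, and all remaining constraints are inherited from $y$. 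Now I would exploit that the objective is \emph{separable}: if the truncation were not optimal for QRAP-NC$^j(S)$, then replacing the first $j$ components of $x^{j+1}(C)$ by a strictly better feasible $y$ would produce a feasible solution to QRAP-NC$^{j+1}(C)$ with strictly smaller objective value, contradicting the optimality of $x^{j+1}(C)$. Hence $(x^{j+1}_i(C))_{i \in \mathcal{N}_j} = x^j(S)$. This exchange argument is the only nontrivial part of the proof; everything else is bookkeeping. (It is also worth noting in passing that QRAP-NC$^j(S)$ is feasible, so that $x^j(S)$ is well defined, which follows because the truncation of $x^{j+1}(C)$ is already a feasible point for it.)

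Finally, since $L_j \leq S \leq U_j$, Lemma~\ref{lemma_mono} gives $x^j(L_j) \leq x^j(S) \leq x^j(U_j)$ componentwise, which is precisely the desired chain $x^j_i(L_j) \leq x^{j+1}_i(C) \leq x^j_i(U_j)$ for all $i \in \mathcal{N}_j$.
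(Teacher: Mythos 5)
Your proof is correct and follows essentially the same route as the paper's: truncate $x^{j+1}(C)$ to its first $j$ components, identify this truncation as $x^j(S)$ with $S := \sum_{i \in \mathcal{N}_j} x_i^{j+1}(C) \in [L_j,U_j]$, and conclude via Lemma~\ref{lemma_mono}. The only difference is that you spell out the exchange/separability argument for why the truncation is optimal for QRAP-NC$^j(S)$, a step the paper asserts without detail.
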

\begin{proof}
Let $x' := ({x}^{j+1}_1(C),\ldots, {x}^{j+1}_j(C))$ be the vector of the first~$j$ components of the optimal solution to the problem QRAP-NC$^{j+1}(C)$. Since $x'$ is feasible for all nested constraints~(\ref{sub_03}) for $k \in \mathcal{N}_j$, this vector is also the optimal solution to QRAP-NC$^j(A)$ where $A := \sum_{i \in \mathcal{N}_j} x'_i$, i.e., we have $x' = {x}^j(A)$. Since $A \in [L_{j} , U_{j}]$, Lemma~\ref{lemma_mono} implies that ${x}_i^j(L_j) \leq {x}_i^j(A) \leq {x}_i^j(U_j)$ for all $i \in \mathcal{N}_j$. It follows that ${x}_i^j(L_j) \leq {x}_i^{j+1}(C) \leq {x}_i^j(U_j)$ for all $i \in \mathcal{N}_j$.
\end{proof}

\begin{lemma}
If for a given $j \in \mathcal{N}_{n-1}$ and vector $y \in \mathbb{R}^{j}$ we have ${x}^j(L_j) \leq y \leq {x}^j(U_j)$, then $L_k \leq \sum_{i \in \mathcal{N}_k} y_i \leq U_k$ for all $k \in \mathcal{N}_j$. 
\label{lemma_add}
\end{lemma}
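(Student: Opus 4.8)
The plan is to exploit the fact that the two boundary solutions ${x}^j(L_j)$ and ${x}^j(U_j)$ are themselves feasible for the subproblems QRAP-NC$^j(L_j)$ and QRAP-NC$^j(U_j)$, so they already satisfy all the nested constraints~(\ref{sub_03}) together with the corresponding equality constraint~(\ref{sub_02}); I would then transfer these bounds to $y$ by summing the componentwise inequalities ${x}^j(L_j) \leq y \leq {x}^j(U_j)$ over each prefix $\mathcal{N}_k$. The two nested bounds are handled separately: the lower boundary solution ${x}^j(L_j)$ certifies the lower bounds $L_k$, and the upper boundary solution ${x}^j(U_j)$ certifies the upper bounds $U_k$.

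For the lower bounds, fix $k \in \mathcal{N}_j$. Since $y_i \geq {x}^j_i(L_j)$ for every $i$, summing over $i \in \mathcal{N}_k$ gives $\sum_{i \in \mathcal{N}_k} y_i \geq \sum_{i \in \mathcal{N}_k} {x}^j_i(L_j)$. For $k \in \mathcal{N}_{j-1}$, feasibility of ${x}^j(L_j)$ for the nested constraints~(\ref{sub_03}) yields $\sum_{i \in \mathcal{N}_k} {x}^j_i(L_j) \geq L_k$, while for $k = j$ the equality constraint~(\ref{sub_02}) of QRAP-NC$^j(L_j)$ gives $\sum_{i \in \mathcal{N}_j} {x}^j_i(L_j) = L_j$. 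In either case we obtain $\sum_{i \in \mathcal{N}_k} y_i \geq L_k$.

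The upper bounds follow by the symmetric argument with the roles reversed: using $y_i \leq {x}^j_i(U_j)$ and summing over $\mathcal{N}_k$ gives $\sum_{i \in \mathcal{N}_k} y_i \leq \sum_{i \in \mathcal{N}_k} {x}^j_i(U_j)$, which is at most $U_k$ for $k \in \mathcal{N}_{j-1}$ by feasibility of ${x}^j(U_j)$ and equals $U_j$ for $k = j$ by the equality constraint of QRAP-NC$^j(U_j)$. Combining the two directions gives $L_k \leq \sum_{i \in \mathcal{N}_k} y_i \leq U_k$ for all $k \in \mathcal{N}_j$, as claimed.

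I do not expect any genuine obstacle: the result is a direct monotonicity-of-prefix-sums argument, and the only points requiring care are to invoke the correct boundary solution for each side and to treat the top index $k = j$ via the equality constraint~(\ref{sub_02}) rather than the nested constraints~(\ref{sub_03}), since the latter are indexed only up to $j-1$.
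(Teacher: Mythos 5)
Your proof is correct and follows essentially the same route as the paper: sum the componentwise inequalities over each prefix $\mathcal{N}_k$ and invoke feasibility of ${x}^j(L_j)$ and ${x}^j(U_j)$ for their respective subproblems. The only difference is cosmetic --- you spell out the case $k=j$ via the equality constraint~(\ref{sub_02}) explicitly, whereas the paper absorbs it into the phrase ``feasible \ldots and $k \leq j$''.
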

\begin{proof}
The sum of the inequalities ${x}_i^j(L_j) \leq y_i \leq {x}_i^j(U_j)$ over all $i \in \mathcal{N}_k$ yields
\begin{equation*}
\sum_{i \in \mathcal{N}_k} {x}_i^j(L_j) \leq \sum_{i \in \mathcal{N}_k} y_i \leq \sum_{i \in \mathcal{N}_k} {x}_i^j(U_j).
\end{equation*}
Since ${x}^j(L_j)$ and ${x}^j(U_j)$ are feasible for QRAP-NC$^j(L_j)$ and QRAP-NC$^j(U_j)$ respectively and $k \leq j$, we have $L_k \leq \sum_{i \in \mathcal{N}_k} {x}_i^j(L_j)$ and $\sum_{i \in \mathcal{N}_k} {x}_i^j(U_j) \leq U_k$ and the result of the lemma follows.
\end{proof}

Lemma~\ref{lemma_bound} implies that, given optimal solutions ${x}^{j-1}(L_{j-1})$ and ${x}^{j-1}(U_{j-1})$, we can replace the nested constraints~(\ref{sub_03}) in QRAP-NC$^j(C)$ by the single-variable bounds~(\ref{sub_alt_03}) without cutting off the optimal solution to QRAP-NC$^j(C)$. Moreover, since these single-variable bounds are stronger than the nested constraints by Lemma~\ref{lemma_add}, adding these constraints does not change the optimal objective value. It follows directly that any optimal solution to QRAP$^j(C)$ is also optimal for QRAP-NC$^j(C)$.

Based on Lemmas~\ref{lemma_mono}-\ref{lemma_add}, the following approach can be used to solve QRAP-NC. We successively solve the subproblems QRAP$^j(L_j)$ and QRAP$^j(U_j)$ from $j=1$ to $n-1$ and finally the subproblem QRAP$^n(R)$, whereby in each step we use the optimal solutions to the preceding subproblems QRAP$^{j-1}(L_{j-1})$ and QRAP$^{j-1}(U_{j-1})$ as input. Note that each of the subproblems is an instance of QRAP. This approach is summarized in Algorithm~\ref{alg_init}.

\begin{algorithm}[ht!]
\caption{An initial sequential algorithm for QRAP-NC.}
\label{alg_init}
\begin{algorithmic}[1]
\STATE{\textbf{Input:} Parameters $a \in \mathbb{R}^n_{>0}$, $L,U \in \mathbb{R}^{n-1}$, $R \in \mathbb{R}$, and $l,u \in \mathbb{R}^n$}
\STATE{\textbf{Output:} Optimal solution ${x}$ to QRAP-NC}
\STATE{Initialize ${x}_1^1(L_1) = L_1$; ${x}_1^1(U_1) = U_1$}
\FOR{$j=2,\ldots,n-1$}
\STATE{Compute optimal solutions ${x}^j(L_j)$ and ${x}^j(U_j)$ to QRAP$^j(L_j)$ and QRAP$^j(U_j)$ respectively}
\ENDFOR
\STATE{Compute optimal solution ${x}^n(R)$ to QRAP$^n(R)$}
\RETURN{Optimal solution $\bar{x} :={x}^n(R)$}
\end{algorithmic}
\end{algorithm}

Since each subproblem QRAP$^j(\cdot)$ can be solved in $O(n)$ time \cite{Brucker1984}, the worst-case time complexity of Algorithm~\ref{alg_init} is $O(n^2)$. However, linear-time algorithms for QRAP such as \cite{Brucker1984} attain their linear time complexity by employing linear-time algorithms for median finding, which are, as already mentioned, in general slower than simple sorting- or sampling-based approaches \cite{Kiwiel2005,Alexandrescu2017}. Note, that also the $O(n \log n)$ time algorithm in \cite{Vidal2018} attains its worst-case time complexity by using such slow linear-time algorithms as a subroutine.

In the next section, we propose an algorithm to solve QRAP-NC in $O(n \log n)$ time that, as opposed to the algorithm in \cite{Vidal2018}, does not require linear-time median-finding algorithms. Instead, it only requires a simple data structure for double-ended priority queues to store several bookkeeping parameters.

We conclude this section with two remarks that may be of independent interest:
\begin{enumerate}
\item
It can be shown that Lemmas~\ref{lemma_mono}-\ref{lemma_add} also hold for the case where the variables are integer-valued, i.e., $x \in \mathbb{Z}^n$ (see also Theorem~5 in \cite{Vidal2018}), given that all parameters $a$, $L$, $U$, $l$, and $u$ are also integer-valued and nonnegative. As a consequence, when solving each subproblem QRAP$^j(\cdot)$ with integer variables, Algorithm~\ref{alg_init} computes an optimal solution to QRAP-NC with integer variables. The worst-case time complexity of this algorithm is $O(n^2)$ since each QRAP$^j(\cdot)$ subproblem with integer variables can be solved in $O(j)$ time \cite{Ibaraki1988}.
\item
Lemmas~\ref{lemma_mono}-\ref{lemma_add} can be generalized to the case where the objective function is the sum of separable convex cost functions $f_i$, i.e., where we replace each term $\frac{1}{2} \frac{x_i^2}{a_i}$ by a convex function $f_i (x_i)$. For this more general problem, this leads to a sequential algorithm that is very similar to Algorithm~\ref{alg_init}. However, initial computational tests indicated that both this algorithm and Algorithm~\ref{alg_init} are in practice much slower than both \infe\ and \dec.
\end{enumerate}

\section{A fast $O(n \log n)$ time algorithm for QRAP-NC}
\label{sec_quad}
The sequential algorithm derived in the previous section does not match the best known time complexity of the algorithm in \cite{Vidal2018}. However, we show in this section that we can implement Algorithm~\ref{alg_init} such that its time complexity reduces to $O( n \log n)$ without requiring a linear-time median finding algorithm. Instead, we only require a data type that supports insertion of elements and the finding and removing of minimum and maximum elements in $O(\log n)$ time such as a double-ended priority queue.

The key to efficiency in our approach is that we do not explicitly compute the solution to each QRAP subproblem. Instead, we only compute an optimal Lagrange multiplier corresponding to the resource constraint~(\ref{sub_alt_02}) that characterizes the entire optimal solution to this subproblem. Subsequently, we use these multipliers to reconstruct the optimal solution to the original problem QRAP-NC using two sets of simple recursive relations that can be executed in $O(n)$ time. In order to compute the Lagrange multipliers without explicitly storing intermediate solutions, we exploit the special structure of these multipliers and of a specific algorithm for solving QRAP. 

First, in Section~\ref{sec_notation}, we introduce some of the used notation. Second, in Section~\ref{sec_opt_mult}, we derive an efficient approach for computing the optimal Lagrange multipliers of the subproblems QRAP$^j(L_j)$ and QRAP$^j(U_j)$. Based on these optimal Lagrange multipliers, we derive in Section~\ref{sec_retrieve}, two simple recursions to compute the optimal solution ${x}$ to QRAP-NC. Finally, in Section~\ref{sec_NC_alg}, we present an $O(n \log n)$ algorithm for QRAP-NC and discuss an implementation that attains this worst-case time complexity.

\subsection{Notation}
\label{sec_notation}
We introduce the following notation concerning the subproblems QRAP$^j(L_j)$ and QRAP$^j(U_j)$ that we use throughout the remainder of this article. We denote for $j \in \mathcal{N}$ the lower and upper single variable bounds (\ref{sub_alt_03}) and (\ref{sub_alt_04}) of QRAP$^j(C)$ with $C \in [L_j , U_j]$ by $\bar{l}^j := (\bar{l}_i^j)_{i \in \mathcal{N}_j}$ and $\bar{u}^j := (\bar{u}_i^j)_{i \in \mathcal{N}_j}$, where $\bar{l}_i^j := {x}_i^{j-1}(L_{j-1})$ and $\bar{u}_i^j := {x}_i^{j-1}(U_{j-1})$ for $i < j$, and $\bar{l}_j^j := l_j$ and $\bar{u}_j^j := u_j$. Furthermore, we denote by $\alpha^j := (\alpha_i^j)_{i \in \mathcal{N}_j}$ and $\beta^j := (\beta_i^j)_{i \in \mathcal{N}_j}$ the lower and upper breakpoints for the QRAP$^j(C)$ subproblem. We call the breakpoints corresponding to $i=j$, i.e., $\alpha_j^j$ and $\beta_j^j$, \emph{initial} breakpoints since QRAP$^j(C)$ is the first subproblem, i.e., with lowest index~$j$, in which we have to compute breakpoint values for the variable $x_j$. Note that we can compute these breakpoints directly as $\alpha_j^j := \frac{l_j}{a_j}$ and $\beta_j^j := \frac{u_j}{a_j}$ by definition of the subproblem QRAP$^j(C)$. 

Furthermore, let $\kappa^{j}$ and $\lambda^{j}$ denote the optimal Lagrange multipliers for the subproblems QRAP$^{j}(L_{j})$ and QRAP$^{j}(U_{j})$ respectively and define $\kappa := (\kappa^j)_{j \in \mathcal{N}}$ and $\lambda := (\lambda^j)_{j \in \mathcal{N}}$, where we set $\kappa^1 := \alpha_1^1$ and $\lambda^1 := \beta_1^1$. If the optimal Lagrange multiplier for a given subproblem QRAP$^j(L_j)$ is not unique, we define without loss of generality $\kappa^j$ as the \emph{maximum} optimal Lagrange multiplier. Analogously, we define $\lambda^j$ as the \emph{minimum} optimal Lagrange multiplier of subproblem QRAP$^j(U_j)$. Note that ${x}^j(L_j) = {x}^j[\kappa^j]$ and ${x}^j(U_j) = {x}^j[\lambda^j]$ by definition of the subproblems QRAP$^j(L_j)$ and QRAP$^j(U_j)$ and of $\kappa^j$ and $\lambda^j$. Finally, for a given subproblem QRAP$^j(C)$, we define the set of its lower breakpoints as $\mathcal{A}^j := \lbrace \alpha_i^j \ | \ i \in \mathcal{N}_j \rbrace$ and the set of its upper breakpoints as $\mathcal{B}^j := \lbrace \beta_i^j \ | \ i \in \mathcal{N}_j \rbrace$. Recall that in Section~\ref{sec_BP_search} we defined breakpoint sets as \emph{multi}sets for convenience when solving QRAP. However, for our approach for a fast algorithm for QRAP-NC, it is crucial that the breakpoint sets do not contain duplicate elements. Therefore, in this section and the remainder of this article, we regard $\mathcal{A}^j$ and $\mathcal{B}^j$ as ordinary sets.

\subsection{Computing the optimal Lagrange multipliers of the subproblems}
\label{sec_opt_mult}

The goal of this subsection is to derive an efficient approach for computing the optimal Lagrange multiplier of each QRAP subproblem in Algorithm~\ref{alg_init} without explicitly calculating any of the intermediate optimal solutions ${x}^j(L_j)$ and ${x}^j(U_j)$ for $j \in \mathcal{N}$.  If we would follow the latter strategy, i.e., if we solve each pair of subproblems QRAP$^j(L_j)$ and QRAP$^j(U_j)$ from scratch, e.g., using Algorithm~\ref{alg_QRAP_01}, we would have to explicitly compute the breakpoint sets for each pair of subproblems. This leads to $O(n^2)$ computations and thus forms an efficiency bottleneck within this algorithm.

We show that we can apply the breakpoint search procedure in Algorithm~\ref{alg_QRAP_01} for solving the subproblems such that each breakpoint set $\mathcal{A}^{j+1}$ can be obtained from the previous set $\mathcal{A}^j$ in $O(1)$ amortized steps, i.e., the total number of steps required to carry out this construction for all $j \in \mathcal{N}_{n-1}$ is $O(n)$. This can be done because of two intermediate results that we show in this subsection. First, the number of distinct values that the breakpoints can take is not $O(n^2)$ but $O(n)$. We obtain this result by unveiling a useful relation between breakpoints of consecutive subproblems, i.e., between $\alpha^j$, $\beta^j$ and $\alpha^{j+1}$, $\beta^{j+1}$. Second, when constructing the breakpoint sets, each distinct breakpoint value is included in or removed from a breakpoint set at most twice during the entire procedure. For this, it is important that we solve each lower subproblem QRAP$^j(L_j)$ by considering the breakpoints in non-decreasing order and each upper subproblem QRAP$^j(U_j)$ by considering the breakpoints in non-increasing order. Together, these two results imply that the construction of the breakpoint sets requires in total $O(n)$ additions and removals of breakpoint values. By using an appropriate data structure such as double-ended priority queues for maintaining the breakpoint sets, each of these steps can be executed in $O(\log n)$ time, which leads to an overall $O(n \log n)$ complexity for computing the optimal Lagrange multipliers $\kappa$ and $\lambda$.

The outline of the remainder of this subsection is as follows. First, in Section~\ref{sec_rel_BP}, we analyze the relation between breakpoints of consecutive subproblems and show that the number of distinct breakpoint values is $O(n)$. Subsequently, in Section~\ref{sec_construct}, we use this information and the structure of Algorithm~\ref{alg_QRAP_01} to construct the breakpoint sets for each subproblem from those of the predecing subproblems. Finally, in Section~\ref{sec_update}, we discuss how the updating of the bookkeeping parameters within the breakpoint search procedure must be adjusted when applying this procedure to the subproblems QRAP$^j(L_j)$ and QRAP$^j(U_j)$.

\subsubsection{Relation between consecutive breakpoints}
\label{sec_rel_BP}

We first show how we can efficiently obtain the breakpoint set of a given subproblem QRAP$^{j+1}(C)$ based on the breakpoint set and optimal Lagrange multipliers of the preceding subproblems QRAP$^{j}(L_j)$ and QRAP$^{j}(U_j)$. We establish for a given $j \in \mathcal{N}_{n-1}$ and $i<j$ the following relation between the subsequent lower breakpoints $\alpha_i^j$ and $\alpha_i^{j+1}$:
\begin{itemize}
\item If $\kappa^j < \alpha_i^j$, it follows from Equation~(\ref{eq_opt_QRA}) that ${x}_i^{j}[\kappa^j] = \bar{l}_i^j$ since $\alpha_i^j = \frac{\bar{l}_i^j}{a_i}$. This implies that $\bar{l}_i^{j+1} = {x}_i^j(L_j) = {x}_i^{j}[\kappa^j]= \bar{l}_i^j$ and thus $\alpha_i^{j+1}= \alpha_i^j$.
\item If $\alpha_i^j \leq \kappa^j < \beta_i^j$, it follows from Equation~(\ref{eq_opt_QRA}) that ${x}_i^j[\kappa^j] = a_i \kappa^j$. Thus, $\alpha_i^{j+1} = \frac{\bar{l}_i^{j+1}}{a_i} = \frac{{x}_i^j(L_j)}{a_i} = \frac{{x}_i^j[\kappa^j]}{a_i}= \kappa^j$.
\item If $\beta_i^j \leq \kappa^j$, then it follows from Equation~(\ref{eq_opt_QRA}) that ${x}_i^j[\kappa^j] = \bar{u}_i^{j}$. This implies that $\bar{l}_i^{j+1} = {x}_i^j(L_j) = {x}_i^j[\kappa^j] = \bar{u}_i^j$ and thus $\alpha_i^{j+1} = \beta_i^j$.
\end{itemize}
Summarizing, we can determine $\alpha_i^{j+1}$ from the previous breakpoints $\alpha_i^j$ and $\beta_i^j$ and the optimal Lagrange multiplier $\kappa^j$ as follows:
\begin{equation}
\alpha_i^{j+1} = \begin{cases}
\alpha_i^j & \text{if } \kappa^j < \alpha_i^j, \\
\kappa^j & \text{if } \alpha_i^j \leq \kappa^j < \beta_i^j, \\
\beta_i^j & \text{if } \beta_i^j \leq \kappa^j.
\end{cases}
\label{eq_rel_lower_BP}
\end{equation}
Analogously, we obtain the following expression for the upper breakpoint $\beta_i^{j+1}$ in terms of the previous breakpoints $\alpha_i^j$ and $\beta_i^j$ and the optimal Lagrange multiplier $\lambda^j$:
\begin{equation}
\beta_i^{j+1} = \begin{cases}
\beta_i^j & \text{if } \lambda^j > \beta_i^j, \\
\lambda^j & \text{if } \beta_i^j \geq \lambda^j > \alpha_i^j, \\
\alpha_i^j & \text{if } \alpha_i^j \geq \lambda^j.
\end{cases}
\label{eq_rel_upper_BP}
\end{equation}
Note that it follows from these relations that $\alpha_i^j \leq \alpha_i^{j+1}$ and $\beta_i^j \geq \beta_i^{j+1}$ for each $j \in \mathcal{N}_{n-1}$. Moreover, note that the only values that the breakpoints can take are those of the initial breakpoints $\alpha_j^j$ and $\beta_j^j$ or of the optimal Lagrange multipliers in $\kappa$ and $\lambda$. Thus, the number of distinct values  among all breakpoints is limited by $4n$.

\subsubsection{Constructing consecutive breakpoint sets}
\label{sec_construct}

As observed at the end of Section~\ref{sec_BP_search}, we can solve a given QRAP subproblem by searching its breakpoints either in non-decreasing or non-increasing order. In particular, we can solve all lower subproblems QRAP$^j(L_j)$ by searching the breakpoints in non-decreasing order and all upper subproblems QRAP$^j(U_j)$ by searching the breakpoints in non-increasing order. When doing this, note that for solving the upper subproblem QRAP$^j(U_j)$ we can use as breakpoint sets the sets that ``remain'' from the breakpoint search for the lower subproblem. More precisely, instead of the sets $\mathcal{A}^j$ and $\mathcal{B}^j$ that we also use as breakpoint sets for the lower subproblem QRAP$^j(L_j)$, we can use the sets $\lbrace  \alpha_i^j \in \mathcal{A}^j \ | \ \alpha_i^j \geq \kappa^j \rbrace$ and $\lbrace  \beta_i^j \in \mathcal{B}^j \ | \  \beta_i^j  \geq \kappa^j \rbrace$ respectively. This is because $\kappa^j \leq \lambda^j$ and thus in the breakpoint search for the upper problem QRAP$^j(U_j)$ no breakpoints smaller than $\kappa^j$ need to be considered.

We define the sets $\tilde{\mathcal{A}}^j$ and $\tilde{\mathcal{B}}^j$ as the sets of lower and upper breakpoints that remain to be considered after solving the subproblems QRAP$^j(L_j)$ and QRAP$^j(U_j)$ in the way described in the previous paragraph, i.e., we have
\begin{align*}
\tilde{\mathcal{A}}^j &:= \lbrace  \alpha_i^j \in \mathcal{A}^j \ | \ \kappa^j \leq \alpha_i^j \leq \lambda^j \rbrace, \\
\tilde{\mathcal{B}}^j &:= \lbrace  \beta_i^j \in \mathcal{B}^j \ | \ \kappa^j \leq \beta_i^j \leq \lambda^j \rbrace.
\end{align*}
We call these sets the \emph{remaining breakpoint sets} of the subproblems QRAP$^j(L_j)$ and QRAP$^j(U_j)$. In the following, we relate these two remaining breakpoint sets to the breakpoint sets of the next two subproblems, i.e., to the sets $\mathcal{A}^{j+1}$ and $\mathcal{B}^{j+1}$. For this, we focus on the relation between the lower remaining breakpoint sets $\tilde{\mathcal{A}}^j$ and the lower breakpoint set $\mathcal{A}^{j+1}$; the relation between the upper remaining breakpoint set $\tilde{\mathcal{B}}^j$ and the upper breakpoint set $\mathcal{B}^{j+1}$ is analogous.

For each $i \in \mathcal{N}_j$, we consider four cases for the value of $\alpha_i^{j+1}$:

\begin{enumerate}
\item
If $\kappa^j \leq \alpha_i^j \leq \lambda^j$, it follows from Equation~(\ref{eq_rel_lower_BP}) that $\alpha_i^{j+1} = \alpha_i^j$. Thus, all values in $\tilde{\mathcal{A}}^j$ act as breakpoint values for the next subproblems, i.e., $\tilde{\mathcal{A}}^j \subseteq \mathcal{A}^{j+1}$.
\item
If $\alpha_i^j < \kappa^j$ and $\kappa^j < \beta_i^j$, it follows from Equation~(\ref{eq_rel_lower_BP}) that $\alpha_i^{j+1} = \kappa^j$.
\item
If $\alpha_i^j < \kappa^j$ and $\beta_i^j \leq \kappa^j$, it follows from Equations~(\ref{eq_rel_lower_BP}) and~(\ref{eq_rel_upper_BP}) that ${x}_i^j(L_j) = \bar{u}_i^j$ and $\beta_i^{j+1} = \beta_i^{j} = \alpha_i^{j+1}$ respectively. Thus, 
\begin{equation*}
\beta_i^{j+1} = \alpha_i^{j+1} \leq \dots \leq \alpha_i^n \leq \beta_i^n \leq \dots \leq \beta_i^{j+1}.
\end{equation*}
This means that $\alpha_i^{j'} = \beta_i^{j'} = \beta_i^{j}$ and $\bar{l}_i^{j'} = \bar{u}_i^{j'} = \bar{u}_i^j$ for all $j' > j$. Thus, in all remaining subproblems, the lower and upper breakpoints of $i$ coincide and ${x}_i^{j'}(C) = \bar{u}_i^j$ for any $j' > j$ and $L_{j'} \leq C \leq U_{j'}$, regardless of the values of the future optimal Lagrange multipliers $\kappa^{j'}$ and $\lambda^{j'}$. This means that we can remove this index (variable) from the breakpoint search.
\item
Finally, if $\alpha_i^j > \lambda^j$, it follows from Equations~(\ref{eq_rel_lower_BP}) and~(\ref{eq_rel_upper_BP}) that $\alpha_i^{j+1} = \alpha_i^j$ and $\beta_i^{j+1} = \alpha_i^j$ respectively. Thus, $\alpha_i^{j+1} = \beta_i^{j+1} = \alpha_i^j$. Analogously to the case $\alpha_i^j \leq \beta_i^j < \kappa^j$, it follows that $\bar{l}_i^{j'} = \bar{u}_i^{j'} = \bar{l}_i^j$ and ${x}_i^{j'}(C) = \bar{l}_i^j$ for all $j' > j$ and $L_{j'} \leq C \leq U_{j'}$. Thus, also in this case we can remove the index~$i$ from the breakpoint search.
\end{enumerate}
These four cases imply that we can construct $\mathcal{A}^{j+1}$ from $\tilde{\mathcal{A}}^j$ as follows:
\begin{equation*}
\mathcal{A}^{j+1} = \tilde{\mathcal{A}}^j \cup \lbrace \alpha_{j+1}^{j+1} \rbrace \cup
\begin{cases}
\lbrace \kappa^j \rbrace & \text{if there exists } i \text{ such that } \alpha_i^j < \kappa^j < \beta_i^j, \\
\emptyset & \text{otherwise.}
\end{cases}
\end{equation*}
Analogously, we can construct $\mathcal{B}^{j+1}$ from $\tilde{\mathcal{B}}^j$ as follows:
\begin{equation*}
\mathcal{B}^{j+1} = \tilde{\mathcal{B}}^j \cup \lbrace \beta_{j+1}^{j+1} \rbrace \cup
\begin{cases}
\lbrace \lambda^j \rbrace & \text{if there exists } i \text{ such that } \alpha_i^j < \lambda^j < \beta_i^j, \\
\emptyset & \text{otherwise.}
\end{cases}
\end{equation*}
The above constructions show how the breakpoint sets evolve over the course of the algorithm. First, in this construction, at most $4n$ additions of breakpoint values to a breakpoint set occur. Second, during the breakpoint search procedure of Algorithm~\ref{alg_QRAP_01}, breakpoints are only removed and not added. This means that updating the breakpoint steps can be done in $O(n)$ steps, i.e., by $O(n)$ additions and removals of breakpoint values.

\subsubsection{Updating bookkeeping parameters}
\label{sec_update}

In order to to efficiently compute the sums ${z}^j[\delta] := \sum_{i \in \mathcal{N}_j} {x}_i^j[\delta]$ for a given breakpoint $\delta$, we define the following bookkeeping parameters analogously to those in the breakpoint search procedure for QRAP in Algorithm~\ref{alg_QRAP_01}:
\begin{align*}
P^j(\delta) &:=  \sum_{i \leq j: \ \delta < \frac{\bar{l}_i^j}{a_i}} \bar{l}_i^j
+
\sum_{i \leq j: \ \delta^j \geq \frac{\bar{u}_i^j}{a_i}} \bar{u}_i^j; &
Q^j (\delta) &:= \sum_{i \leq j: \ \frac{\bar{l}_i^j}{a_i} \leq \delta < \frac{\bar{u}_i^j}{a_i}} a_i ;
\end{align*}
Each breakpoint value $\kappa^{j'}$ and $\lambda^{j'}$ in a given breakpoint set acts as a collective breakpoint for one or multiple activities. As a consequence, within the breakpoint search procedure, they have the same function as the ``regular'' initial lower and upper breakpoint values $\alpha_i^i$ and $\beta_i^i$. Thus, when a breakpoint value of the form $\kappa^{j'}$ or $\lambda^{j'}$ has been considered, we require an efficient update of the bookkeeping sums $P^j(\kappa^{j'})$, $Q^j(\kappa^{j'})$ or $P^j(\lambda^{j'})$, $Q^j(\lambda^{j'})$ respectively. In the case of $\kappa^{j'}$, we update $P^j(\kappa^{j'})$ by subtracting from this value the sum of the lower bounds $\bar{l}_i^j$ of those activities $i$ whose lower breakpoint equals $\kappa^{j'}$, i.e., for which $\alpha_i^j = \kappa^{j'}$. The sum of these values is
\begin{equation*}
\sum_{i < j: \ \alpha_i^j = \kappa^{j'}} \bar{l}_i^j
=
\sum_{i < j: \ \alpha_i^j = \kappa^{j'}} a_i \alpha_i^j
=
\sum_{i < j: \ \alpha_i^j = \kappa^{j'}} a_i \kappa^{j'}
=
\kappa^{j'} \sum_{i \leq j': \ \alpha_i^{j'} \leq \kappa^{j'} < \beta_i^{j'}} a_i 
=
Q^{j'} (\kappa^{j'}) \kappa^{j'},
\end{equation*}
since $\alpha_i^j = \frac{\bar{l}_i^j}{a_i}$ for each $i \in \mathcal{N}_j$ and we have that $\alpha_i^j = \kappa^{j'}$ if and only if $\alpha_i^{j''} = \kappa^{j'}$ for all $j'' \in \lbrace j', \ldots, j \rbrace$. Analogously, we update the bookkeeping sum $Q^j(\kappa^{j'})$ by adding to this value the sum of the parameters $a_i$ for those $i$ with $\alpha_i^j = \kappa^{j'}$. This sum is
\begin{equation*}
\sum_{i < j: \ \alpha_i^j = \kappa^{j'}} a_i
=
 \sum_{i \leq j': \ \alpha_i^{j'} \leq \kappa^{j'} < \beta_i^{j'}} a_i 
=
Q^{j'} (\kappa^{j'}).
\end{equation*}
Thus, the updates take the form $P^j(\kappa^{j'}) - Q^{j'}(\kappa^{j'}) \kappa^{j'}$ and $Q^j(\kappa^{j'}) + Q^{j'}(\kappa^{j'})$.

The updates for the case of $\lambda^{j'}$, i.e., for $P^j(\lambda^{j'})$ and $Q^j(\lambda^{j'})$, are analogous to those for the case of $\kappa^{j'}$. Table~\ref{tab_rel_02} provides an overview of the updates of the bookkeeping sums for both these cases for each of the four breakpoint values types $\alpha_i^i$, $\beta_i^i$, $\kappa^{j'}$, and $\lambda^{j'}$.

\begin{table}[ht!]
\centering
\begin{tabular}{l | ll | ll}
\toprule
& \multicolumn{2}{c|}{In QRAP$^j(L_j)$ (non-decreasing search)} & \multicolumn{2}{c}{In QRAP$^j(U_j)$ (non-increasing search)} \\
Type of $\delta$ & $P^j(\delta)$ & $Q^j(\delta)$ & $P^j(\delta)$ & $Q^j(\delta)$ \\
\midrule
$\delta \equiv \alpha_i^i$ & $P^j(\delta) - l_i$ & $Q^j(\delta)_i + a_i$ & $P^j(\delta) + l_i$ & $Q^j(\delta) - a_i$ \\
$\delta \equiv \beta_i^i$ & $P^j(\delta) + u_i$ & $Q^j(\delta) - a_i$ & $P^j(\delta) - u_i$ & $Q^j(\delta) + a_i$ \\
$\delta \equiv \kappa^{j'}$, $j' < j$ & $P^j(\delta)  - Q^{j'}(\kappa^{j'}) \kappa^{j'}$ & $Q^j(\delta) + Q^{j'}(\kappa^{j'})$ & $P^j(\delta)  + Q^{j'}(\kappa^{j'}) \kappa^{j'}$ & $Q^j(\delta) - Q^{j'}(\kappa^{j'})$ \\
$\delta \equiv \lambda^{j'}$, $j' < j$ & $P^j(\delta)  + Q^{j'}(\lambda^{j'}) \lambda^{j'}$ & $Q^j(\delta) - Q^{j'}(\lambda^{j'})$ & $P^j(\delta)  - Q^{j'}(\lambda^{j'}) \lambda^{j'}$ & $Q^j(\delta) + Q^{j'}(\lambda^{j'})$ \\
\bottomrule
\end{tabular}
\caption{Updating the bookkeeping sums $P^j(\delta)$ and $Q^j(\delta)$ when searching the breakpoints in non-decreasing order (QRAP$^j(L_j)$) and non-increasing order (QRAP$^J(U_j)$).}
\label{tab_rel_02}
\end{table}

\subsection{Recovering the optimal solution to QRAP-NC}
\label{sec_retrieve}

In the previous section, we found an efficient way to compute the optimal Lagrange multipliers $\kappa^j$ and $\lambda^j$ for the QRAP subproblems QRAP$^j(L_j)$ and QRAP$^j(U_j)$. In this section, we show how we can use these values to compute the optimal solution ${x}^n(R)$. For this, we first determine which nested constraints are tight in ${x}^n(R)$ and use this information to reconstruct the individual terms ${x}_i^n (R)$ for $i \in \mathcal{N}$. To this end, for each $j \in \mathcal{N}_{n-1}$, let ${\ell}_j$ denote the smallest index larger than or equal to $j$ such that one of its corresponding nested constraints is tight in ${x}^n(R)$. More precisely,
\begin{equation*}
{\ell}_j  := \min \left( k \geq j \ \middle| \ \sum_{i \in \mathcal{N}_k} {x}_i^n (R) = L_k \text{ or } \sum_{i \in \mathcal{N}_k} {x}_i^n (R) = U_k \right).
\end{equation*}
Furthermore, let $V_j$ denote the value of the tight nested constraint corresponding to the index ${\ell}_j$ and $\chi^j$ the corresponding multiplier, i.e., $V_j \in \lbrace L_{{\ell}_j}, U_{{\ell}_j} \rbrace$ and $\chi^j \in \lbrace  \kappa^{{\ell}_j} , \lambda^{{\ell}_j} \rbrace$. More precisely,
\begin{itemize}
\item
$\sum_{i \in \mathcal{N}_{{\ell}_j}} {x}_i^n (R) = L_{{\ell}_j}$ implies $V_j = L_{{\ell}_j}$ and $\chi^j = \kappa^{{\ell}_j}$;
\item
$\sum_{i \in \mathcal{N}_{{\ell}_j}}  {x}_i^n (R) = U_{{\ell}_j}$ implies $V_j = U_{{\ell}_j}$ and $\chi^j = \lambda^{{\ell}_j}$.
\end{itemize}
The main result in this subsection is that the values $\chi^j$ act as optimal Lagrange multipliers for the resource constraint~(\ref{sub_alt_02}) in the subproblem QRAP$^n(R)$. As a consequence, given these values, we can calculate ${x}(R)$ directly using a relation similar to the Lagrangian relaxation solution in Equation~(\ref{eq_opt_QRA}). To show this result, we prove Lemmas~\ref{lemma_chi} and~\ref{lemma_xn}. First, Lemma~\ref{lemma_chi} shows how we can iteratively compute $\chi$ from the optimal multipliers $\kappa$ and $\lambda$ using a simple recursive relation. Second, Lemma~\ref{lemma_xn} shows how we can calculate ${x}^n (R)$ from $\chi$ using a relation similar to that in Equation~(\ref{eq_opt_QRA}).
\begin{lemma}
We have $\chi^n = \kappa^n = \lambda^n$. Moreover, for each $j \in \mathcal{N}_{n-1}$, we have:
\begin{enumerate}
\item $ \chi^{j+1} \leq \kappa^j$ implies $\sum_{i \in \mathcal{N}_j} {x}_i^n (R) = L_j$ and $\chi^j = \kappa^j$;
\item $\lambda^j \leq \chi^{j+1}$ implies $\sum_{i \in \mathcal{N}_j} {x}_i^n (R) = U_j$ and $\chi^j = \lambda^j$,
\item $\kappa^j < \chi^{j+1} < \lambda^{j}$ implies $L_j < \sum_{i \in \mathcal{N}_j} {x}_i^n (R) < U_j$ and $\chi^j = \chi^{j+1}$.
\end{enumerate}
\label{lemma_chi}
\end{lemma}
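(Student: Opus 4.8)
The plan is to prove the statement by downward induction on $j$, carrying the strengthened hypothesis that $\chi^j$ is an optimal Lagrange multiplier for the resource constraint~(\ref{sub_alt_02}) of the subproblem QRAP$^j(B_j)$, where I write $B_j := \sum_{i \in \mathcal{N}_j} {x}_i^n(R)$. The anchor of this induction is the observation already used in the proof of Lemma~\ref{lemma_bound}, namely that the restriction $({x}_1^n(R),\ldots,{x}_j^n(R))$ is feasible and hence optimal for QRAP-NC$^j(B_j)$, so it equals ${x}^j(B_j)$ and thus also solves the equivalent QRAP$^j(B_j)$. Consequently, once the hypothesis supplies an optimal multiplier $\chi^j$ of QRAP$^j(B_j)$, Equation~(\ref{eq_opt_QRA}) yields ${x}_i^n(R) = \mathrm{median}(\bar{l}_i^j, a_i \chi^j, \bar{u}_i^j)$ for $i \leq j$, which is the representation the inductive step consumes at the next level.

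For the base case $j = n$, since $L_n = U_n = R$ the subproblems QRAP$^n(L_n)$ and QRAP$^n(U_n)$ coincide, so they share the same (interval-valued) set of optimal multipliers; as $\kappa^n$ is its maximum and $\lambda^n$ its minimum, $\kappa^n \geq \lambda^n$. Combined with $\kappa^j \leq \lambda^j$, which follows from Lemma~\ref{lemma_mono} because $L_j \le U_j$ forces ${x}^j(L_j) \le {x}^j(U_j)$ and hence orders the corresponding multipliers, this gives $\kappa^n = \lambda^n$. Since the constraint at index $n$ is the always-tight resource constraint, $\ell_n = n$ and $\chi^n \in \lbrace \kappa^n,\lambda^n \rbrace$, so $\chi^n = \kappa^n = \lambda^n$ is the now-unique optimal multiplier of QRAP$^n(R) = $ QRAP$^n(B_n)$.

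For the inductive step I assume $\chi^{j+1}$ is an optimal multiplier of QRAP$^{j+1}(B_{j+1})$, so that ${x}_i^n(R) = \mathrm{median}({x}_i^j(L_j), a_i \chi^{j+1}, {x}_i^j(U_j))$ for $i \le j$, using $\bar{l}_i^{j+1} = {x}_i^j(L_j)$ and $\bar{u}_i^{j+1} = {x}_i^j(U_j)$ from~(\ref{sub_alt_03}). The heart of the argument is a case distinction on the position of $\chi^{j+1}$ relative to $\kappa^j$ and $\lambda^j$, which for $j < n$ satisfy $\kappa^j < \lambda^j$ (otherwise ${x}^j(L_j) = {x}^j(U_j)$, contradicting $L_j < U_j$), so the three cases partition $\mathbb{R}$. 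Using ${x}_i^j(L_j) = {x}_i^j[\kappa^j]$ and ${x}_i^j(U_j) = {x}_i^j[\lambda^j]$ together with the monotonicity of ${x}_i^j[\delta]$ in $\delta$, I would show termwise that: if $\chi^{j+1} \le \kappa^j$ then each median collapses to ${x}_i^j(L_j)$, so $B_j = \sum_{i \le j} {x}_i^j(L_j) = L_j$; if $\chi^{j+1} \ge \lambda^j$ then each median collapses to ${x}_i^j(U_j)$, so $B_j = U_j$; and if $\kappa^j < \chi^{j+1} < \lambda^j$ then each median equals ${x}_i^j[\chi^{j+1}]$, so $B_j = {z}^j[\chi^{j+1}]$. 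In the first two cases the constraint at $j$ is tight, giving $\ell_j = j$ with $\chi^j = \kappa^j$ and $\chi^j = \lambda^j$ respectively; in the third case $\chi^{j+1}$ is itself an optimal multiplier of QRAP$^j(B_j)$, which both discharges the inductive hypothesis and, because $\kappa^j$ (resp. $\lambda^j$) is the \emph{maximal} (resp. \emph{minimal}) optimal multiplier at total $L_j$ (resp. $U_j$), forces $L_j < B_j < U_j$, whence $\ell_j = \ell_{j+1}$ and $\chi^j = \chi^{j+1}$.

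I expect the main obstacle to be the termwise median computations in the case analysis, in particular verifying in the boundary subcases, where $a_i \chi^{j+1}$ leaves the box $[\bar{l}_i^j, \bar{u}_i^j]$, that clamping into the tighter interval $[{x}_i^j(L_j), {x}_i^j(U_j)]$ still reproduces ${x}_i^j[\chi^{j+1}]$; this is precisely what guarantees that $\chi^{j+1}$ survives as an optimal multiplier of the smaller subproblem in the third case. A second delicate point is the reliance on the extremal definitions of $\kappa^j$ and $\lambda^j$: these are exactly what upgrade the weak inequalities coming from the monotonicity of ${z}^j[\delta]$ into the strict separations $L_j < B_j < U_j$ needed to conclude that the nested constraint at index $j$ is not tight.
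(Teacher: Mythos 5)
Your proof is correct in its main structure but takes a genuinely different route from the paper's. The paper proves each of the three cases by tracking the tight index $\ell_{j+1}$ and propagating the breakpoint-update relations (\ref{eq_rel_lower_BP})--(\ref{eq_rel_upper_BP}) from level $j$ up to level $\ell_{j+1}$: its case~1 needs a preliminary reduction to ``the largest index $j$ below $\ell_{j+1}$ with $\chi^{j+1}\le\kappa^j$'' followed by chains of sum inequalities, and its case~3 is a contradiction argument running through breakpoint indices. You instead push a single invariant downward --- $\chi^{j+1}$ is an optimal multiplier of QRAP$^{j+1}(B_{j+1})$, $B_{j+1}:=\sum_{i\in\mathcal{N}_{j+1}}x_i^n(R)$ --- and settle each case by clamping algebra on $x_i^n(R)=\mathrm{median}\bigl(x_i^j(L_j),\,a_i\chi^{j+1},\,x_i^j(U_j)\bigr)$. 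The termwise verifications you defer do go through: in case~1, either $a_i\kappa^j\le\bar u_i^j$, so that $x_i^j(L_j)=\max(\bar l_i^j,a_i\kappa^j)\ge a_i\kappa^j\ge a_i\chi^{j+1}$ and the median collapses to $x_i^j(L_j)$; or $a_i\kappa^j>\bar u_i^j$, in which case $x_i^j(L_j)=\bar u_i^j$, and then Lemma~\ref{lemma_mono} together with feasibility forces $x_i^j(U_j)=\bar u_i^j$ as well, so the interval is degenerate. Case~2 is symmetric, and case~3 reduces to the identity that clamping $a_i\chi^{j+1}$ into the interval $\bigl[\mathrm{clamp}(a_i\kappa^j),\mathrm{clamp}(a_i\lambda^j)\bigr]$ (clamps taken with respect to $[\bar l_i^j,\bar u_i^j]$) equals clamping it into $[\bar l_i^j,\bar u_i^j]$, a short case check using monotonicity of clamping and $\kappa^j<\chi^{j+1}<\lambda^j$. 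Your use of the extremality of $\kappa^j$ and $\lambda^j$ to upgrade to the strict inequalities $L_j<B_j<U_j$, and hence $\ell_j=\ell_{j+1}$ and $\chi^j=\chi^{j+1}$, matches exactly the role these definitions play in the paper. What your route buys is locality: it never touches the relations (\ref{eq_rel_lower_BP})--(\ref{eq_rel_upper_BP}), and the invariant it carries is precisely what Lemma~\ref{lemma_xn} needs afterwards.

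One step is genuinely wrong, although it concerns a point the paper itself treats cavalierly: your base case argument that $\kappa^n=\lambda^n$. Since $L_n=U_n=R$, the problems QRAP$^n(L_n)$ and QRAP$^n(U_n)$ coincide, so $\kappa^n$ and $\lambda^n$ are the maximum and the minimum of the \emph{same} interval of optimal multipliers; this gives $\kappa^n\ge\lambda^n$, and your appeal to Lemma~\ref{lemma_mono} for the reverse inequality does not work: monotonicity (or equality) of the \emph{solutions} says nothing about the multipliers. Indeed, $\kappa^n=\lambda^n$ is exactly the assertion that the optimal multiplier of QRAP$^n(R)$ is unique, which can fail under all of the paper's standing assumptions: take $n=2$, $a=(1,1)$, $l=(0,2)$, $u=(1,3)$, $L_1=l_1=0$, $U_1=u_1=1$, $R=3$; the optimal solution is $(1,2)$ and every $\delta\in[1,2]$ is an optimal multiplier, so $\kappa^2=2>1=\lambda^2$. (The paper's one-sentence justification of $\chi^n=\kappa^n=\lambda^n$ has the same defect.) Fortunately this is immaterial to your induction: all it needs is that $\chi^n$ is \emph{some} optimal multiplier of QRAP$^n(R)=\mathrm{QRAP}^n(B_n)$, which holds for either choice in $\lbrace\kappa^n,\lambda^n\rbrace$, and Algorithm~\ref{alg_01} only ever uses $\chi^n:=\kappa^n$. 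So your argument does establish the three enumerated implications --- the substantive content of the lemma --- but the identity $\kappa^n=\lambda^n$ should be dropped, weakened to ``fix $\chi^n:=\kappa^n$'', or flagged as requiring a uniqueness assumption.
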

\begin{proof}
See Appendix~\ref{pf_chi}.
\end{proof}

\begin{lemma}
For each $i \in \mathcal{N}$, we have
\begin{equation}
{x}_i^n (R) =
\begin{cases}
l_i & \text{if } \chi^i < \alpha_i^i, \\
a_i \chi^i & \text{if } \alpha_i^i \leq \chi^i < \beta_i^i, \\
u_i & \text{if } \beta_i^i \leq \chi^i.
\end{cases}
 \label{eq_opt_nL}
\end{equation}
\label{lemma_xn}
\end{lemma}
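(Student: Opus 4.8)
The plan is to prove the identity $x_i^n(R) = x_i^i[\chi^i]$, where $x_i^i[\,\cdot\,]$ denotes the QRAP$^i$ Lagrangian solution of the form~(\ref{eq_opt_QRA}) evaluated with the \emph{initial} bounds $\bar{l}_i^i = l_i$, $\bar{u}_i^i = u_i$ and the initial breakpoints $\alpha_i^i$, $\beta_i^i$; this is precisely the right-hand side of~(\ref{eq_opt_nL}). I would split the argument into two reductions.

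\emph{First reduction (decomposition at the first tight constraint).} Fix $i$ and let $\ell_i$ be as in the definition preceding Lemma~\ref{lemma_chi}, with tight value $V_i \in \{L_{\ell_i}, U_{\ell_i}\}$ and multiplier $\chi^i \in \{\kappa^{\ell_i}, \lambda^{\ell_i}\}$. Because $\sum_{k \in \mathcal{N}_{\ell_i}} x_k^n(R) = V_i$ is tight, the restriction $(x_1^n(R), \ldots, x_{\ell_i}^n(R))$ is feasible for QRAP-NC$^{\ell_i}(V_i)$; an exchange argument identical to the one in the proof of Lemma~\ref{lemma_bound} (replacing these first $\ell_i$ components by a cheaper feasible vector would strictly improve $x^n(R)$ without disturbing the remaining nested constraints, since the sum over $\mathcal{N}_{\ell_i}$ stays equal to $V_i$) shows that it is in fact optimal, so by uniqueness it equals $x^{\ell_i}(V_i)$. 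By the notation of Section~\ref{sec_notation} we have $x^{\ell_i}(L_{\ell_i}) = x^{\ell_i}[\kappa^{\ell_i}]$ and $x^{\ell_i}(U_{\ell_i}) = x^{\ell_i}[\lambda^{\ell_i}]$, hence in either case $x_i^n(R) = x_i^{\ell_i}[\chi^i]$, the QRAP$^{\ell_i}$ Lagrangian solution with bounds $\bar{l}_i^{\ell_i}, \bar{u}_i^{\ell_i}$ and breakpoints $\alpha_i^{\ell_i}, \beta_i^{\ell_i}$.

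\emph{Second reduction (collapsing to the initial level).} It then remains to show $x_i^{\ell_i}[\chi^i] = x_i^i[\chi^i]$. Here I would use two facts supplied by Lemma~\ref{lemma_chi}: since no nested constraint with index in $\{i, \ldots, \ell_i - 1\}$ is tight, case~3 of the lemma applies repeatedly and yields both $\chi^i = \chi^{i+1} = \cdots = \chi^{\ell_i}$ and the strict inclusions $\kappa^j < \chi^i < \lambda^j$ for all $i \le j \le \ell_i - 1$. Combining $\kappa^j < \chi^i$ with the clamping recursion~(\ref{eq_rel_lower_BP}) (which sets $\alpha_i^{j+1}$ to $\kappa^j$ truncated to $[\alpha_i^j, \beta_i^j]$) gives a one-line induction showing that $\alpha_i^j \le \chi^i$ is preserved; together with the monotonicity $\alpha_i^i \le \alpha_i^{\ell_i}$ noted after~(\ref{eq_rel_upper_BP}), this proves that $\chi^i$ lies on the same side of the lower breakpoint at level $\ell_i$ as at level $i$, and the symmetric statement for $\beta$ follows from $\chi^i < \lambda^j$ and~(\ref{eq_rel_upper_BP}). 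In the two saturated cases one shows in addition that the relevant breakpoint is frozen: if $\chi^i < \alpha_i^i$ then $\kappa^j < \chi^i < \alpha_i^i \le \alpha_i^j$ forces the first branch of~(\ref{eq_rel_lower_BP}), so $\alpha_i^{\ell_i} = \alpha_i^i$ and hence $\bar{l}_i^{\ell_i} = a_i \alpha_i^{\ell_i} = l_i$; symmetrically $\beta_i^i \le \chi^i$ gives $\bar{u}_i^{\ell_i} = u_i$. Checking the three regions $\chi^i < \alpha_i^i$, $\alpha_i^i \le \chi^i < \beta_i^i$, and $\beta_i^i \le \chi^i$ against~(\ref{eq_opt_QRA}) then yields $x_i^{\ell_i}[\chi^i] = x_i^i[\chi^i]$, which is~(\ref{eq_opt_nL}).

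I expect the first reduction to be the conceptual crux and the part that needs the most care: one must argue that tightness of the constraint at $\ell_i$ genuinely decouples the first $\ell_i$ variables, i.e.\ that the restricted vector is not merely feasible but optimal for QRAP-NC$^{\ell_i}(V_i)$. The second reduction is mostly bookkeeping, but its subtle point is the frozen-breakpoint argument in the saturated cases, which is what guarantees that the \emph{initial} bounds $l_i, u_i$ (rather than the possibly tighter level-$\ell_i$ bounds $\bar{l}_i^{\ell_i}, \bar{u}_i^{\ell_i}$) are the correct values in~(\ref{eq_opt_nL}); this relies crucially on the strict betweenness $\kappa^j < \chi^i < \lambda^j$ from Lemma~\ref{lemma_chi} rather than on breakpoint monotonicity alone.
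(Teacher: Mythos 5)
Your proof is correct, and its first reduction coincides with the paper's own first step: both observe that tightness of the nested constraint at the first tight index at or after $i$ (the paper decomposes at \emph{all} tight indices $j_1 < \cdots < j_q$ at once, you work with $\ell_i$, which is the same thing segment by segment) makes the restriction of ${x}^n(R)$ to the first $\ell_i$ coordinates optimal for QRAP-NC$^{\ell_i}(V_i)$, hence, by uniqueness, equal to ${x}^{\ell_i}(V_i) = {x}^{\ell_i}[\chi^i]$. Where you genuinely diverge is in the second step. The paper never touches the propagated breakpoints: it uses non-tightness of the intermediate constraints together with Lemmas~\ref{lemma_bound} and~\ref{lemma_add} to justify a modified reformulation QRAP$^{j_p}(V_{j_p})$ in which the variables with indices in $\lbrace j_{p-1}+1,\ldots,j_p \rbrace$ carry their \emph{original} bounds $l_i \leq x_i \leq u_i$, and then reads off~(\ref{eq_opt_nL}) by applying~(\ref{eq_opt_QRA}) to that problem with multiplier $\chi^{j_p}$. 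You instead keep the standard reformulation with propagated bounds $\bar{l}_i^{\ell_i}, \bar{u}_i^{\ell_i}$ and show that its Lagrangian solution evaluated at $\chi^i$ agrees with the initial-breakpoint formula, by tracking the recursions~(\ref{eq_rel_lower_BP}) and~(\ref{eq_rel_upper_BP}) under the strict betweenness $\kappa^j < \chi^i < \lambda^j$, which you correctly extract from case~3 of Lemma~\ref{lemma_chi} by exhaustiveness of its three cases applied to the non-tight constraints $i,\ldots,\ell_i-1$. Both routes are sound, and your clamping induction and frozen-breakpoint arguments check out against the recursions. What each buys: the paper's argument is shorter and stays at the level of problem reformulations, but it quietly relies on the reader accepting that $\chi^{j_p}$ remains the optimal multiplier after the trailing bounds are relaxed, which is precisely what the appeal to Lemmas~\ref{lemma_bound} and~\ref{lemma_add} covers; your argument makes the corresponding content explicit and elementary, working only with quantities the algorithm actually computes, at the price of importing Lemma~\ref{lemma_chi}, which the paper's own proof of Lemma~\ref{lemma_xn} does not use at all. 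Since Lemma~\ref{lemma_chi} is proved before and independently of Lemma~\ref{lemma_xn}, there is no circularity, and your version has the incidental benefit of explaining directly why the reconstruction via~(\ref{eq_chi}) and~(\ref{eq_opt_nL}) is consistent with the breakpoint bookkeeping of Section~\ref{sec_opt_mult}.
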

\begin{proof}
See Appendix~\ref{pf_xn}.
\end{proof}
Note that, starting from $\chi^n = \kappa^n$ and using Lemma~\ref{lemma_chi}, we can compute the values $\chi^{j}$ recursively as
\begin{equation}
\chi^j = \begin{cases}
\kappa^j & \text{if } \chi^{j+1} \leq \kappa^j, \\
\lambda^j & \text{if } \chi^{j+1} \geq \lambda^j, \\
\chi^{j+1} & \text{otherwise.}
\end{cases}
\label{eq_chi}
\end{equation}
Thus, given the optimal Lagrange multipliers $\kappa$ and $\lambda$, we can compute ${x}$ in $O(n)$ time as ${x}^n(R)$ using the two relatively simple recursions in Equations~(\ref{eq_opt_nL}) and~(\ref{eq_chi}).

\subsection{An $O(n \log n)$ time algorithm for QRAP-NC}
\label{sec_NC_alg}

In the previous two subsections, we derived an efficient approach to compute the optimal Lagrange multipliers $\kappa$ and $\lambda$ for the QRAP$^j(L_j)$ and QRAP$^j(U_j)$ subproblems and to compute from these multipliers the optimal solution ${x}$. In this subsection, we combine these two ingredients to formulate a fast and efficient algorithm for QRAP-NC (Algorithm~\ref{alg_01}). More precisely, in the first part of this subsection, Section~\ref{sec_implement}, we present our algorithm and discuss several of its details regarding the subroutines for computing the optimal Lagrange multipliers of the QRAP$^j(L_j)$ and QRAP$^j(U_j)$ subproblems. This includes several procedures that deal with corner cases and with the updating of the breakpoint sets and the bookkeeping parameters. In the second part, Section~\ref{sec_complexity}, we focus on the efficiency of the algorithm. In particular, we prove in Lemma~\ref{lemma_time} that the algorithm has an $O(n \log n)$ worst-case time complexity when using an appropriate data structure.

\subsubsection{Description of the algorithm}
\label{sec_implement}

Algorithm~\ref{alg_01} captures our approach for solving QRAP-NC. First, in Lines~3-13, the algorithm initializes all problem parameters, the initial breakpoint values and breakpoint sets, and the initial bookkeeping parameters. Throughout the entire algorithm, it maintains four separate sets $\mathcal{A}$, $\mathcal{B}$, $\mathcal{K}$, and $\mathcal{L}$ of breakpoint values corresponding to the ``source'' of the values, i.e., this specifies whether they are one of the initial breakpoint values $\alpha_i^i$ or $\beta_i^i$ or one of the optimal Lagrange multipliers $\kappa^j$ or $\lambda^j$ respectively. Second, in Lines~14-16, the algorithm applies for each $j \in \mathcal{N} \backslash \lbrace 1 \rbrace$ the procedure {\sc SolveSubproblems}($j$) (see Algorithm~\ref{alg_BP}) that computes the optimal Lagrange multipliers $\kappa^j$ and $\lambda^j$ for the two subproblems QRAP$^j(L_j)$ and QRAP$^j(U_j)$. Finally, using the obtained vectors of optimal Lagrange multipliers $\kappa$ and $\lambda$, the algorithm computes in Lines~17-22 the (alternative) multiplier values $\chi$ using the recursion in Equation~(\ref{eq_chi}) and from these values the solution ${x}^n(R)$ using Equation~(\ref{eq_opt_nL}). 

\begin{algorithm}[ht!]
\caption{An $O(n \log n)$ time algorithm for QRAP-NC.}
\label{alg_01}
\begin{multicols}{2}
\begin{algorithmic}[1]
\STATE{\textbf{Input:} Parameters $a \in \mathbb{R}^n_{>0}$, $L,U \in \mathbb{R}^{n-1}$, $R \in \mathbb{R}$, and $l,u \in \mathbb{R}^n$}
\STATE{\textbf{Output:} Optimal solution ${x}$ to QRAP-NC}
\STATE{$L_1 = \max(L_1,l_1)$; $U_1 = \min (U_1,u_1)$}
\FOR{$j=2$ to $n$}
\STATE{$L_j = \max(L_j,L_{j-1} + l_j)$}
\STATE{$U_j = \min(U_j , U_{j-1} + u_j)$}
\ENDFOR
\FOR{$i=1$ to $n$}
\STATE{$\alpha_i^i = \frac{l_i}{a_i}$; $\beta_i^i = \frac{u_i}{a_i}$}
\ENDFOR
\STATE{$\kappa^1 = \alpha_1^1$; $\lambda^1 = \beta_1^1$; $\kappa^j = \infty$, $\lambda^j = -\infty$ for $j > 1$}
\STATE{Initialize breakpoint sets: $\mathcal{A} := \lbrace \alpha_1^1 \rbrace$; $\mathcal{B} := \lbrace \beta_1^1 \rbrace$; $\mathcal{K} := \emptyset$; $\mathcal{L} := \emptyset$}
\STATE{Initialize bookkeeping sums: $\bar{P}^1_L = \bar{P}^1_U = 0$; $\bar{Q}^1_L = \bar{Q}^1_U = a_1$}
\FOR{$j=2$ to $n$}
\STATE{Apply procedure {\sc SolveSubproblems}($j$)}
\ENDFOR
\STATE{$\chi^n := \kappa^n$}
\FOR{$i = n-1$ down to $1$}
\STATE{Compute $\chi^i$ using Equation~(\ref{eq_chi})}
\ENDFOR
\STATE{Compute ${x}^n(R)$ using Equation~(\ref{eq_opt_nL})}
\RETURN{Optimal solution $\bar{x} := {x}^n(R)$}
\end{algorithmic}
\end{multicols}
\end{algorithm}

\begin{algorithm}[ht!]
\caption{Procedure {\sc SolveSubproblems}($j$).}
\label{alg_BP}
\begin{multicols}{2}
\begin{algorithmic}[1]
\IF{$L_{j-1} + \max(l_j,\min(a_j \kappa^{j-1}, u_j)) = L_j$}
\STATE{$\kappa^j := \kappa^{j-1}$; replace $\kappa^{j-1}$ in $\mathcal{K}$ by $\kappa^j$}
\IF{$\kappa^j < \alpha_j^j$}
\STATE{$\bar{P}^j_L := \bar{P}^{j-1}_L + l_j$; $\bar{Q}^j_L := \bar{Q}^{j-1}_L$}
\ELSIF{$\beta_j^j < \kappa^j$}
\STATE{$\bar{P}^j_L = \bar{P}^{j-1}_L + u_j$; $\bar{Q}^j_L := \bar{Q}^{j-1}_L$}
\ELSE
\STATE{$\bar{P}^j_L := \bar{P}^{j-1}_L$; $\bar{Q}^j_L := \bar{Q}^{j-1}_L + a_j$}
\ENDIF
\ELSIF{$L_{j-1} + \max(l_j,\min(a_j \kappa^{j-1},u_j)) > L_j$}
\STATE{($\kappa^j < \kappa^{j-1}$:) $\kappa^j = (L_j - L_{j-1})/a_j$}
\STATE{$\bar{P}^j_L := L_{j-1}$; $\bar{Q}^j_L := a_j$}
\STATE{Add $\kappa^j$ to $\mathcal{K}$}
\ELSE
\STATE{($\kappa^j > \kappa^{j-1}$:) remove $\kappa^{j-1}$ from $\mathcal{K}$}
\ENDIF

\IF{$U_{j-1} + \max(l_j,\min(a_i \lambda^{j-1},u_j)) = U_j$}
\STATE{$\lambda^j := \lambda^{j-1}$; replace $\lambda^{j-1}$ in $\mathcal{L}$ by $\lambda^j$}
\IF{$\lambda^j < \alpha_j^j$}
\STATE{$\bar{P}^j_U := \bar{P}^{j-1}_U + l_j$, $\bar{Q}^j_U := \bar{Q}^{j-1}_U$}
\ELSIF{$\beta_j^j < \lambda^j$}
\STATE{$\bar{P}^j_U = \bar{P}^{j-1}_U + u_j$; $\bar{Q}^j_U := \bar{Q}^{j-1}_U$}
\ELSE
\STATE{$\bar{P}^j_U := \bar{P}^{j-1}_U$; $\bar{Q}^j_U := \bar{Q}^{j-1}_U + a_j$}
\ENDIF
\ELSIF{$U_{j-1} + \max(l_j,\min(a_j \lambda^{j-1},u_j)) < U_j$}
\STATE{($\lambda^j > \lambda^{j-1}$:) $\lambda^j = (U_j - U_{j-1})/a_j$}
\STATE{$\bar{P}^j_U := U_{j-1}$; $\bar{Q}^j_U := a_j$}
\STATE{Add $\lambda^j$ to $\mathcal{L}$}
\ELSE
\STATE{($\lambda^j < \lambda^{j-1}$:) remove $\lambda^{j-1}$ from $\mathcal{L}$}
\ENDIF

\IF{$\min(\kappa^{j-1},\kappa^{j}) < \alpha_j^j \leq \max( \lambda^{j-1},\lambda^{j})$}
\STATE{Add $\alpha_j^j$ to $\mathcal{A}$}
\ENDIF
\IF{$\min(\kappa^{j-1},\kappa^{j}) \leq \beta_j^j < \max( \lambda^{j-1},\lambda^{j})$}
\STATE{Add $\beta_j^j$ to $\mathcal{B}$}
\ENDIF

\IF {$\kappa^j > \lambda^{j-1}$}
\IF{$\kappa^{j-1} < \alpha_j^j$}
\STATE{$P := \bar{P}^{j-1}_L + l_j$; $Q := \bar{Q}^{j-1}_L$}
\ELSIF{$\alpha_j^j \leq \kappa^{j-1} < \beta_j^j$}
\STATE{$P := \bar{P}^{j-1}_L$; $Q := \bar{Q}^{j-1}_L + a_j$}
\ELSE
\STATE{$P := \bar{P}^{j-1}_L + u_j$;  $Q := \bar{Q}^{j-1}_L$}
\ENDIF
\STATE{Apply procedure {\sc LowerSubproblem}($j$)}
\ENDIF
\IF {$\lambda^j < \lambda^{j-1}$}
\IF{$\beta_j^j < \lambda^{j-1}$}
\STATE{$P:= \bar{P}^{j-1}_U + u_j$; $Q:= \bar{Q}^{j-1}_U$}
\ELSIF{$\alpha_j^j < \lambda^{j-1} \leq \beta_j^j$}
\STATE{$P := \bar{P}^{j-1}_U$; $Q := \bar{Q}^{j-1}_U + a_j$}
\ELSE
\STATE{$P := \bar{P}^{j-1}_U + l_j$; $Q := \bar{Q}^{j-1}_U$}
\ENDIF
\STATE{Apply procedure {\sc UpperSubproblem}($j$)}
\ENDIF

\end{algorithmic}
\end{multicols}
\end{algorithm}

The procedure {\sc SolveSubproblems}$(j)$ carries out the breakpoint search procedure for the subproblems QRAP$^j(L_j)$ and QRAP$^j(U_j)$ as described in Section~\ref{sec_BP_search} (Lines~38-59). This is done by first initializing the bookkeeping parameters for these breakpoint search procedures in Lines~39-46 and Lines~49-56 and subsequently applying the the procedures {\sc LowerSubproblem}$(j)$ (Line~47, Algorithm~\ref{alg_lower}) and {\sc UpperSubproblem}$(j)$ (Line~57, Algorithm~\ref{alg_upper}), which are identical in nature to Lines~5-22 of Algorithm~\ref{alg_QRAP_01}.  Before carrying out the breakpoint search procedure, two possible corner cases are considered in Lines~1-38 with regard to relation between the to-be-computed multipliers $\kappa^j$ and $\lambda^j$ and their predecessors $\kappa^{j-1}$ and $\lambda^{j-1}$. We briefly discuss these corner cases for $\kappa^j$; the corner cases for $\lambda^j$ are analogous.

\begin{algorithm}[ht!]
\caption{Procedure {\sc LowerSubproblem}($j$).}
\label{alg_lower}
\begin{multicols}{2}
\begin{algorithmic}[1]
\REPEAT
\STATE{Choose minimum to-be-considered breakpoint: $\delta := \max(\mathcal{A},\mathcal{B},\mathcal{K},\mathcal{L})$ and corresponding member set $\mathcal{D} \in \lbrace \mathcal{A},\mathcal{B},\mathcal{K},\mathcal{L} \rbrace$}
\IF{$P +  Q \delta = L_j$}
\STATE{$\kappa^j := \delta$; add $\kappa^j$ to $\mathcal{K}$}
\STATE{$\bar{P}^j_L := P$, $\bar{Q}^j_L := Q$}
\RETURN
\ELSIF{$P + Q \delta > L_j$}
\STATE{($\kappa^j < \delta$:) $\kappa^j := (L_j-P)/Q$; add $\kappa^j$ to $\mathcal{K}$}
\STATE{$\bar{P}^j_L := P$, $\bar{Q}^j_L := Q$}
\RETURN
\ELSE
\STATE{($\kappa^j > \delta$:) breakpoint~$\delta$ will be considered}
\IF{$\mathcal{D} \equiv \mathcal{A}$}
\STATE{Let breakpoint be $\delta \equiv \alpha_k^k$}
\STATE{$P := P - l_k$; $Q := Q +a_k$}
\STATE{Remove $\alpha_k^k$ from $\mathcal{A}$}
\ELSIF{$\mathcal{D} \equiv \mathcal{B}$}
\STATE{Let breakpoint be $\delta \equiv \beta_k^k$}
\STATE{$P := P + u_k$; $Q := Q - a_k$}
\STATE{Remove $\beta_k^k$ from $\mathcal{B}$}
\ELSIF{$\mathcal{D} \equiv \mathcal{K}$}
\STATE{Let breakpoint be $\delta \equiv \kappa^k$}
\STATE{$P := P - \bar{Q}^k_L \kappa^k$, $Q := Q + \bar{Q}^k_L$}
\STATE{Remove $\kappa^k$ from $\mathcal{K}$}
\ELSE
\STATE{Let breakpoint be $\delta \equiv \lambda^k$}
\STATE{$P := P + \bar{Q}^k_U \lambda^k$; $Q := Q - \bar{Q}^k_U$}
\STATE{Remove $\lambda^k$ from $\mathcal{L}$}
\ENDIF
\ENDIF
\UNTIL{$\kappa^j$ has been determined}
\end{algorithmic}
\end{multicols}
\end{algorithm}

\begin{algorithm}[ht!]
\caption{Procedure {\sc UpperSubproblem}($j$).}
\label{alg_upper}
\begin{multicols}{2}
\begin{algorithmic}[1]
\REPEAT
\STATE{Choose maximum to-be-considered breakpoint: $\delta := \min(\mathcal{A},\mathcal{B},\mathcal{K},\mathcal{L})$ and corresponding member set $\mathcal{D} \in \lbrace \mathcal{A},\mathcal{B},\mathcal{K},\mathcal{L} \rbrace$}
\IF{$P + Q \delta = U_j$}
\STATE{$\lambda^j := \delta$; add $\lambda^j$ to $\mathcal{L}$}
\STATE{$\bar{P}^j_U := P$, $\bar{Q}^j_U := Q$}
\RETURN
\ELSIF{$P + Q \delta < U_j$}
\STATE{($\lambda^j > \delta$:) $\lambda^j := (U_j - P)/Q)$; add $\lambda^j$ to $\mathcal{L}$}
\STATE{$\bar{P}^j_U := P$, $\bar{Q}^j_U := Q$}
\RETURN
\ELSE
\STATE{($\lambda^j < \delta$): breakpoint will be considered}
\IF{$\mathcal{D} \equiv \mathcal{A}$}
\STATE{Let breakpoint be $\delta \equiv \alpha_k^k$}
\STATE{$P := P + l_k$, $Q := Q - a_k$}
\STATE{Remove $\alpha_k^k$ from $\mathcal{A}$}
\ELSIF{$\mathcal{D} \equiv \mathcal{B}$}
\STATE{Let breakpoint be $\delta \equiv \beta_k^k$}
\STATE{$P := P - u_k$; $Q := Q + a_k$}
\STATE{Remove $\beta_k^k$ from $\mathcal{B}$}
\ELSIF{$\mathcal{D} \equiv \mathcal{K}$}
\STATE{Let breakpoint be $\delta \equiv \kappa^k$}
\STATE{$P := P + \bar{Q}^k_L \kappa^k$, $Q := Q - \bar{Q}^k_L$}
\STATE{Remove $\kappa^k$ from $\mathcal{K}$}
\ELSE
\STATE{Let breakpoint be $\delta \equiv \lambda^k$}
\STATE{$P := P - \bar{Q}^k_U \lambda^k$; $Q := Q + \bar{Q}^k_U $}
\STATE{Remove $\lambda^k$ from $\mathcal{L}$}
\ENDIF
\ENDIF
\UNTIL{$\lambda^j$ has been determined}
\end{algorithmic}
\end{multicols}
\end{algorithm}

The first corner case occurs when $\kappa^j = \kappa^{j-1}$ (Lines~1-9 in {\sc SolveSubproblems}$(j)$). This case corresponds to Lines~5-7 in Algorithm~\ref{alg_QRAP_01} , where the currently considered candidate multiplier~$\delta_i$ leads to a solution $x[\delta_i]$ that sums to $C$, i.e., $z[\delta_i] = C$. For QRAP-NC$^j(L_j)$, this case thus occurs if and only if $L_{j-1} + {x}_j^j[\kappa^j] = L_j$, i.e., if and only if ${x}_i^j (L_j) = {x}_i^{j-1} (L_{j-1})$ for all $i \in \mathcal{N}_{j-1}$ and $L_{j-1} + \max(l_j, \min(a_j \chi^j,u_j)) = L_j$. The second case (Lines~9-13) occurs when $\kappa^j < \kappa^{j-1}$ and corresponds to Lines~8-10 of Algorithm~\ref{alg_QRAP_01}, where the candidate multiplier~$\delta_i$ leads to a solution $x[\delta_i]$ whose sum is larger than $C$, i.e., $z[\delta_i] > C$. In QRAP-NC$^j(L_j)$, this case occurs if and only if $L_{j-1} + {x}_j^j[\kappa^j] > L_j$, i.e., if and only if ${x}_i^j (L_j) = {x}_i^{j-1} (L_{j-1})$ for all $i \in \mathcal{N}_{j-1}$ and $L_{j-1} + \max(l_j, \min(a_j \chi^j,u_j)) > L_j$. In both cases, it is not necessary to carry out the actual breakpoint search to find $\kappa^j$ since either $\kappa^j = \kappa^{j-1}$ (the first case) or $\kappa^j = (L_j - L_{j-1}) / a_j$ (the second case).

Whether or not one of the above mentioned corner cases occurs partly determines whether or not we have to include the new initial breakpoint values $\alpha_j^j$ and $\beta_j^j$ in the breakpoint search procedure. The algorithm makes this decision in Lines~33-38: $\alpha_j^j$ and $\beta_j^j$ are included only if they are in between the lowest and highest breakpoint values that can be considered in the breakpoint search. This lowest value is $\kappa^{j}$
 if $\kappa^j \leq \kappa^{j-1}$ (when one of the two corner cases for $\kappa^j$ occurs and thus this value has already been determined) and $\kappa^{j-1}$ otherwise (when breakpoint search is required to find $\kappa^j$). Analogously, the highest value is $\lambda^{j}$ if $\lambda^j \geq \lambda^{j-1}$ and $\lambda^{j-1}$ otherwise.

\subsubsection{Time complexity}
\label{sec_complexity}

We now establish the worst-case time complexity of Algorithm~\ref{alg_01} by means of the following lemma:
\begin{lemma}
Algorithm~\ref{alg_01} can be implemented such that its worst-case time complexity is $O(n \log n)$.
\label{lemma_time}
\end{lemma}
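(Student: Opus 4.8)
The plan is to charge the running time of Algorithm~\ref{alg_01} to three disjoint parts and to show that the dominant one is $O(n\log n)$. The first part is the preprocessing in Lines~3--13 (tightening the bounds $L_j,U_j$ and computing the initial breakpoints $\alpha_j^j,\beta_j^j$) together with the post-processing in Lines~17--22, where $\chi$ is obtained from $\kappa$ and $\lambda$ via the recursion~(\ref{eq_chi}) and ${x}^n(R)$ is recovered via~(\ref{eq_opt_nL}); each is a single pass over the indices and hence costs $O(n)$. The second part consists of the $O(1)$ arithmetic performed per considered breakpoint, namely the updates of $P$, $Q$, and of $\bar{P}^j_L,\bar{P}^j_U,\bar{Q}^j_L,\bar{Q}^j_U$ according to Table~\ref{tab_rel_02}; by the counting below these contribute $O(n)$ in total. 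The third and decisive part is the cost of all operations on the breakpoint sets $\mathcal{A},\mathcal{B},\mathcal{K},\mathcal{L}$ across the $n-1$ calls to {\sc SolveSubproblems} (and hence to {\sc LowerSubproblem}/{\sc UpperSubproblem}), and the goal is to bound this by $O(n\log n)$.

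For the data structure, I would maintain each of the four sets $\mathcal{A},\mathcal{B},\mathcal{K},\mathcal{L}$ as a separate double-ended priority queue supporting insertion and the finding and deletion of both the minimum and the maximum element in $O(\log n)$ time. Then the selection of the extreme to-be-considered breakpoint $\delta$ together with its source set $\mathcal{D}$ in Line~2 of {\sc LowerSubproblem} and {\sc UpperSubproblem} costs $O(1)$ peeks (comparing the four current extrema) plus one $O(\log n)$ deletion; every other access in {\sc SolveSubproblems} (the insertions in Lines~13, 29, 34, 37 and the removals in Lines~15, 31) is a single $O(\log n)$ operation, while the relabellings of a persisting multiplier in Lines~2 and~18 can be implemented as in-place identifier updates at $O(1)$ cost. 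Thus each individual set operation is $O(\log n)$, and it remains only to bound their number.

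The heart of the argument, and the step I expect to be the main obstacle, is an amortized count showing that only $O(n)$ set operations occur over the \emph{entire} run rather than $O(n)$ per subproblem (which would give $O(n^2)$). By the relations~(\ref{eq_rel_lower_BP}) and~(\ref{eq_rel_upper_BP}) analysed in Section~\ref{sec_rel_BP}, the only values a breakpoint can ever take are one of the $2n$ initial breakpoints $\alpha_j^j,\beta_j^j$ or one of the at most $2n$ multipliers $\kappa^j,\lambda^j$. Each initial breakpoint is inserted at most once (Lines~34 and~37 of {\sc SolveSubproblems}), and each multiplier value is created and inserted at most once (Line~13 of {\sc SolveSubproblems} or Lines~4 and~8 of {\sc LowerSubproblem} for $\kappa^j$, and symmetrically for $\lambda^j$), so the total number of insertions is at most $4n=O(n)$. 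Here it is essential that the sets contain no duplicates, so that in Case~2 of Section~\ref{sec_construct} the many indices collapsing to the collective value $\kappa^j$ produce a single insertion. Since a deletion can only remove a previously inserted element, and---by the four-case construction in Section~\ref{sec_construct}---a value removed during the non-decreasing search of a lower subproblem (respectively the non-increasing search of an upper subproblem) is never reinserted, the total number of deletions is also $O(n)$. Finally, each iteration of the main loop in {\sc LowerSubproblem}/{\sc UpperSubproblem} either terminates the call (and there are only $2(n-1)$ calls) or performs exactly one deletion, so the total number of loop iterations over all calls is at most the number of terminations plus the number of deletions, i.e.\ $O(n)$.

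Combining the three parts, the algorithm performs $O(n)$ set operations, each costing $O(\log n)$, plus $O(n)$ additional work, for a total worst-case time complexity of $O(n\log n)$, as claimed. The only delicate point is the amortized bound of the previous paragraph: one must verify both that the relabelling of a persisting multiplier (first corner case, Line~2 of {\sc SolveSubproblems}) does not constitute a fresh insertion and that no breakpoint value is inserted into a queue more than once. Both facts follow from the monotonicity $\alpha_i^j\le\alpha_i^{j+1}$ and $\beta_i^j\ge\beta_i^{j+1}$ noted after Equation~(\ref{eq_rel_upper_BP}), which guarantees that the sequence of consumed breakpoints is consistent with the single non-decreasing (respectively non-increasing) sweep assumed by the potential-style accounting.
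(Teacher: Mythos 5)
Your proposal is correct and follows essentially the same approach as the paper's proof: both bound the total number of operations on the breakpoint sets $\mathcal{A}$, $\mathcal{B}$, $\mathcal{K}$, $\mathcal{L}$ by $O(n)$ (using the fact from Section~\ref{sec_opt_mult} that each of the at most $4n$ distinct breakpoint values is inserted and removed at most once), charge $O(\log n)$ per operation to a double-ended priority queue (the paper instantiates this with min-max heaps), and account for all remaining arithmetic and the recursions~(\ref{eq_chi}) and~(\ref{eq_opt_nL}) in $O(n)$ total. Your accounting of loop iterations (each iteration either terminates a call or performs a deletion) is a slightly more explicit version of the same amortized argument the paper uses.
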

\begin{proof}
Observe that, throughout the algorithm and all its procedures, all operations have a total time complexity of $O(n)$ except for four operations on the sets $\mathcal{A}$, $\mathcal{B}$, $\mathcal{K}$, and $\mathcal{L}$ of to-be-considered breakpoints. For each of these breakpoint sets, say $\mathcal{D}$, these are finding the minimum and maximum breakpoint in $\mathcal{D}$ (Lines~2 and~18 in Algorithm~\ref{alg_BP} and Line~2 in Algorithms~\ref{alg_lower} and~\ref{alg_upper}), inserting a breakpoint value in $\mathcal{D}$ (Lines~13, 29, 34, and 37 in Algorithm~\ref{alg_BP}), and removing the minimum or maximum breakpoint from $\mathcal{D}$ (Lines~15 and 31 in Algorithm~\ref{alg_BP} and Lines~16, 20, 24, and 28 in Algorithms~\ref{alg_lower} and~\ref{alg_upper}). As we showed in Section~\ref{sec_opt_mult}, each breakpoint value is inserted and removed at most once during the course of the algorithm. Moreover, in the worst case, we have to find the minimum and maximum breakpoint value in $\mathcal{D}$ a number of $n$ times. Thus, the total number of breakpoint set operations is $O(n)$. If we maintain the breakpoint sets as min-max heaps \cite{Atkinson1986}, each of these operations can be executed in $O(1)$ (finding the minimum and maximum) and $O(\log n)$ (inserting and removing a breakpoint) time. This means that the total time complexity of all four breakpoint set operations is $O(n \log n)$ if we use min-max heaps to store the breakpoint sets. It follows that Algorithm~\ref{alg_01} can be implemented such that its worst-case time complexity is $O(n \log n)$.
\end{proof}
In practice, carrying out the breakpoint set operations might be faster if we use a different data structure than min-max heaps to maintain the breakpoint sets $\mathcal{A}$, $\mathcal{B}$, $\mathcal{K}$, and $\mathcal{L}$. For instance, when $n$ is small, simple arrays might be sufficient for fast insertion and removal of breakpoints, even though this increases the worst-case time complexity to $O(n^2)$. On the other hand, \cite{Hochbaum1995} suggests to keep the breakpoint sets by means of a so-called disjoint set data structure (see, e.g., \cite{Cormen2009}). Using such a structure, a sequence of $O(n)$ breakpoint insertions and deletions in sets of size at most $n$ can be done in $O(n)$ time using the algorithm in \cite{Gabow1985}. However, it is unclear whether the algorithm in \cite{Gabow1985} is fast in practice for two reasons. First, it is complicated and cumbersome to implement compared to other algorithms for insertion and removal operations on disjoint set data structures \cite{Galil1991}. Second, although the authors mention in the preliminary study \cite{Gabow1983} that their algorithm outperforms the then state-of-the-art, the literature contains hardly if any studies on its practical performance. Alternatively, one could use other algorithms (e.g., those evaluated in \cite{Patwary2010}) that have a worse worst-case time complexity but have been shown to be fast in practice.

\section{Evaluation}
\label{sec_eval}

In this section, we evaluate the performance of our Algorithm~\ref{alg_01} as presented in Section~\ref{sec_NC_alg}, to which we shall refer as \seq\ for clarity, and compare it with the state-of-the-art algorithms \infe\ from \cite{vdKlauw2017} and \dec\ from \cite{Vidal2018}. We carry out two types of experiments. First, we evaluate the performance of our algorithm on realistic instances of the battery scheduling problem BATTERY. For this, we tailor \seq\ to this problem and compare this implementation to a tailored implementation of \infe\ within the simulation tool DEMKit \cite{Hoogsteen2019}. Second, we compare the execution time and scalability of our algorithm and of \infe\ and \dec\ on synthetic instances with sizes ranging from $10$ to one million variables. We have implemented all three algorithms in Python (version 3.5) to be able to compare them to the implementation in DEMKit, which is also written in Python. All simulations and computations have been executed on a 2.60 GHz Dell Inspiron 15 with an Intel Core i7-6700HQ CPU and 16 GB of RAM.

 In Section~\ref{sec_instance}, we describe in more detail the problem instances that we use in the evaluation. Subsequently, in Section~\ref{sec_implement}, we discuss several implementation choices and in Section~\ref{sec_disc} we present and discuss the results of our evaluation.

\subsection{Problem instances}
\label{sec_instance}
For the comparison of the tailored implementation of our algorithm \seq\ with the tailored implementation of \infe\ within DEMKit, we generate realistic instances of the problem BATTERY. For this, we consider the setting where a battery charging schedule for two consecutive days needs to be computed. This scheduling horizon is divided into 15-minute time intervals, resulting in $n = 192$. To study the influence of the battery size on the solving time, we consider three scenarios that correspond to three different battery sizes and denote them by {\sc Small}, {\sc Medium}, and {\sc Large}. In these scenarios, the battery capacity is 20~kWh, 100~kWh, or 180~kWh and the (dis)charging rate is 4~kW, 20~kW, or 36~kW respectively. This leads to $\Delta t = \frac{1}{4}$ and to the values for $X_{\min}$, $X_{\max}$, and $D$ as given in Table~\ref{tab_par}. Note that this is equivalent to the situation where either 10, 50, or 90 percent of the households have installed a smaller ``home'' battery with a capacity of 5~kWh and a (dis)charging rate of 1~kW, which corresponds to real-life field tests such as described in \cite{Reijnders2018}. We set both the initial and target SoC to a given fraction of the capacity, i.e., $S_{\text{start}} = S_{\text{end}} = sD$, where $s \in \lbrace 0,0.1,0.2,\ldots,1 \rbrace$. For each scenario, we simulate 50 battery schedules of two days. As input for the base load $p$, we use measurement data of the actual power consumption of 40 households for 100 consecutive days that were obtained in the field test described in \cite{Hoogsteen2017b}.
\begin{table}[ht!]
\centering
\begin{tabular}{r | r r r }
\toprule
& $X_{\min}$ & $X_{\max}$ & $D$ \\
\midrule
{\sc Small} & $-4.0 \cdot 10^3$ & $4.0 \cdot 10^3$ & $8.0 \cdot 10^4$ \\
{\sc Medium} & $-2.0 \cdot 10^4$ & $2.0 \cdot 10^4$ & $4.0 \cdot 10^5$ \\
{\sc Large} & $-3.6 \cdot 10^4$ & $3.6 \cdot 10^4$ & $7.2 \cdot 10^5$ \\
\bottomrule
\end{tabular}
\caption{Parameter choices for the battery scheduling problem for each scenario.}
\label{tab_par}
\end{table}

For the scalability analysis, we generate synthetic instances in the same way as in \cite{Vidal2018}. For this, we consider instance sizes~$n$ in the set $\lbrace 10,20,50,100,200,500,\ldots,10^6 \rbrace$ and for each of these sizes, we generate 10 instances. In each instance, we sample the parameters $a$, $l$, and $u$ from the uniform distributions $U(0,1)$, $U(0.1,0.5)$, and $U(0.5,0.9)$ respectively. To generate the nested bounds $L$ and $U$, we first draw for each $i \in \mathcal{N}$ two values $X_i$ and $Y_i$ from the uniform distribution $U(l_i,u_i)$. Subsequently, we define for each $j \in \mathcal{N}$ the values $v_j := \sum_{i \in \mathcal{N}_j} X_i$ and $w_j :=  \sum_{i \in \mathcal{N}_j} Y_i$ and we set $L_j := \min (v_j, w_j)$ and $U_j := \max(v_j,w_j)$ for $j < n$ and $L_n = U_n = \frac{1}{2}(v_n + w_n)$.

\subsection{Implementation details}
\label{sec_implement}

In both the divide-and-conquer algorithm \dec\ and the infeasibility-guided algorithm \infe, we use Algorithm~\ref{alg_QRAP_01} to solve the QRAP subproblems. Note that using this algorithm instead of linear-time algorithms such as in \cite{Kiwiel2008a} increases the worst-case time complexity of \dec\ and \infe\ by a factor $O(\log n)$. However, for practically relevant problems sizes, this procedure is generally faster in practice and easier to implement than the linear-time algorithms in, e.g., \cite{Kiwiel2008a}.

For the double-ended queues needed in \seq\ for the optimal Lagrange multipliers $\kappa$ and $\lambda$, we use the Python container datatype {\tt deque}. Moreover, we initially implemented the double-ended priority queues for the lower and upper initial breakpoint values $(\alpha_i^i)_{i \in \mathcal{N}}$ and $(\beta_i^i)_{i \in \mathcal{N}}$ as symmetric min-max heaps \cite{Arvind1999}. However, initial tests indicated that using instead a coupled min-heap and max-heap implementation with total correspondence leads to similar or even lower execution times of the overall algorithm. Moreover, the latter data structure is much simpler to implement using the standard Python libary {\tt heapq}. Therefore, we use this method instead of min-max heaps. In this alternative method, we insert new breakpoints in both the min-heap and the max-heap and use the min-heap to find and delete a minimum breakpoint (in the lower subproblems) and the max-heap to find and delete a maximum breakpoint (in the upper subproblems). Moreover, we assign to each breakpoint a flag that is 1 if the breakpoint has been removed from either of the heaps and 0 otherwise. This prevents that we find a minimum (maximum) breakpoint in the min-heap (max-heap) that was already considered in the other heap and thus has been removed from the breakpoint search.

\subsection{Results and discussion}
\label{sec_disc}

In this section, we present and discuss the results of our evaluation. First, we discuss the results of the comparison of the tailored implementation of \seq\ with the tailored implementation of \infe\ within DEMKit. Figure~\ref{plot_DEM} shows the ratios between the execution times of the tailored implementation of \infe\ and that of \seq. Moreover, Tables~\ref{tab_result_small}-\ref{tab_result_large} contain for each scenario and each initial and target SoC value the mean, maximum, and coefficient of variation (CoV) of the execution times. The CoV is the sample deviation divided by the sample mean and is a suitable measure of the variation between samples when comparing different collections of samples with significantly different sample means.

\begin{figure}[ht!]
\centering
\begin{subfigure}[b]{.32\textwidth}
\centering
\includegraphics{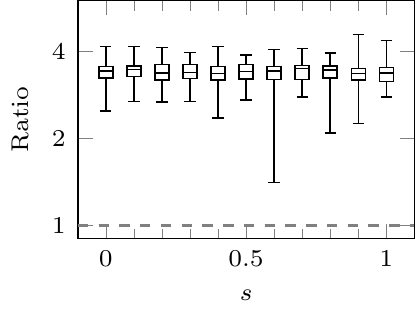}
\caption{{\sc Small}.}
\label{plot_DEM_small}
\end{subfigure}
\hspace{0.005\textwidth}%
\begin{subfigure}[b]{.32\textwidth}
\centering
\includegraphics{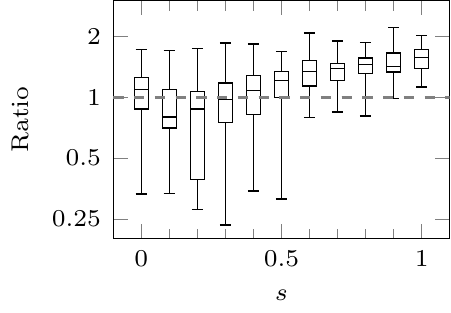}
\caption{{\sc Medium}.}
\label{plot_DEM_Medium}
\end{subfigure}
\hspace{0.005\textwidth}%
\begin{subfigure}[b]{.32\textwidth}
\centering
\includegraphics{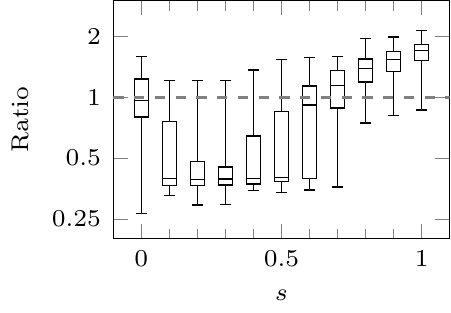}
\caption{{\sc Large}.}
\label{plot_DEM_small}
\end{subfigure}
\caption{Boxplots of the execution time of the tailored implementation of \infe\ within DEMKit divided by that of the tailored implementation of \seq\ for the three scenarios. Ratios larger than 1 imply that \seq\ was faster than \infe.}
\label{plot_DEM}
\end{figure}

\begin{table}[ht!]
\centering
\begin{tabular}{r | r r r | r r r}
\toprule
& \multicolumn{3}{c}{\seq} & \multicolumn{3}{|c}{\infe\ within DEMKit} \\
$s$
& Mean & Max & CoV & Mean & Max & CoV \\
\midrule
0 & $1.80 \cdot 10^{-3}$ & $2.09 \cdot 10^{-3}$ & $5.80 \cdot 10^{-2}$ & $6.15 \cdot 10^{-3}$ & $   7.23 \cdot 10^{-3}$ &  $7.61 \cdot 10^{-2}$ \\
 0.1&  $1.78 \cdot 10^{-3}$ & $   2.14 \cdot 10^{-3}$ &$   6.18 \cdot 10^{-2}$ & $   6.15 \cdot 10^{-3}$ & $   7.16 \cdot 10^{-3}$ & $   6.52 \cdot 10^{-2}$ \\
0.2&   $1.81 \cdot 10^{-3}$ & $   2.77 \cdot 10^{-8}$ & $   9.19 \cdot 10^{-2}$ & $   6.10 \cdot 10^{-3}$ & $   7.15 \cdot 10^{-3}$ & $   7.17 \cdot 10^{-2}$ \\
 0.3&  $1.79 \cdot 10^{-3}$ & $   2.25 \cdot 10^{-3}$ &  $   7.70 \cdot 10^{-2}$ & $   6.10 \cdot 10^{-3}$ & $   7.10 \cdot 10^{-3}$ & $   7.34 \cdot 10^{-2}$ \\
 0.4&  $1.79 \cdot 10^{-3}$ & $   2.44 \cdot 10^{-3}$ & $   7.25 \cdot 10^{-2}$ & $   6.07 \cdot 10^{-3}$ & $   7.11 \cdot 10^{-3}$ & $   7.10 \cdot 10^{-2}$ \\
  0.5& $1.77 \cdot 10^{-3}$ & $   2.25 \cdot 10^{-3}$ &  $   6.28 \cdot 10^{-2}$ & $   6.02 \cdot 10^{-3}$ & $   6.94 \cdot 10^{-3}$ &$   6.41 \cdot 10^{-2}$ \\
 0.6&  $1.83 \cdot 10^{-3}$ & $   4.00 \cdot 10^{-3}$ & $   1.89 \cdot 10^{-1}$ & $   6.01 \cdot 10^{-3}$ & $   7.09 \cdot 10^{-3}$ &  $   6.78 \cdot 10^{-2}$ \\
 0.7&  $1.77 \cdot 10^{-3}$ & $   2.21 \cdot 10^{-3}$ & $   6.49 \cdot 10^{-2}$ & $   6.05 \cdot 10^{-3}$ & $   6.97 \cdot 10^{-3}$ & $   7.69 \cdot 10^{-2}$ \\
 0.8&  $1.80 \cdot 10^{-3}$ & $   2.68 \cdot 10^{-3}$ &  $   1.00 \cdot 10^{-1}$ & $   6.05 \cdot 10^{-3}$ & $   7.06 \cdot 10^{-3}$ & $   7.52 \cdot 10^{-2}$ \\
 0.9&  $1.79 \cdot 10^{-3}$ & $   2.87 \cdot 10^{-3}$ &  $   1.24 \cdot 10^{-1}$ & $   5.97 \cdot 10^{-3}$ & $   7.88 \cdot 10^{-3}$ &  $   8.48 \cdot 10^{-2}$ \\
 1&  $1.79 \cdot 10^{-3}$ & $   2.18 \cdot 10^{-3}$ &  $   6.02 \cdot 10^{-2}$ & $   6.01 \cdot 10^{-3}$ & $   8.22 \cdot 10^{-3}$ &  $   8.81 \cdot 10^{-2}$ \\
\bottomrule
\end{tabular}
\caption{The mean, maximum, and coefficient of variation of the execution times of the tailored implementation of \seq\ and the tailored implementation of \infe\ within DEMKit for the scenario {\sc Small}.}
\label{tab_result_small}
\end{table}

\begin{table}[ht!]
\centering
\begin{tabular}{r | r r r | r r r}
\toprule
& \multicolumn{3}{c}{\seq} & \multicolumn{3}{|c}{\infe\ within DEMKit} \\
$s$
& Mean & Max & CoV & Mean & Max & CoV \\
\midrule
  0 & $ 1.71 \cdot 10^{-3} $ & $   2.40 \cdot 10^{-3} $ & $   1.19 \cdot 10^{-1} $ & $   1.88 \cdot 10^{-3} $ & $   3.45 \cdot 10^{-3} $ & $   3.10 \cdot 10^{-1} $ \\
0.1 & $   1.64 \cdot 10^{-3} $ & $   1.96 \cdot 10^{-3} $ & $   7.39 \cdot 10^{-2} $ & $   1.45 \cdot 10^{-3} $ & $   3.31 \cdot 10^{-3} $ & $   4.47 \cdot 10^{-1} $ \\
 0.2 & $  1.66 \cdot 10^{-3} $ & $   2.40 \cdot 10^{-3} $ & $   1.03 \cdot 10^{-1} $ & $   1.47 \cdot 10^{-3} $ & $   3.34 \cdot 10^{-3} $ & $   4.78 \cdot 10^{-1} $ \\
0.3 & $   1.76 \cdot 10^{-3} $ & $   3.50 \cdot 10^{-3} $ & $   1.85 \cdot 10^{-1} $ & $   1.68 \cdot 10^{-3} $ & $   3.43 \cdot 10^{-3} $ & $   4.41 \cdot 10^{-1} $ \\
0.4 & $   1.65 \cdot 10^{-3} $ & $   2.36 \cdot 10^{-3} $ & $   1.07 \cdot 10^{-1} $ & $   1.76 \cdot 10^{-3} $ & $   3.51 \cdot 10^{-3} $ & $   3.45 \cdot 10^{-1} $ \\
0.5 & $   1.66 \cdot 10^{-3} $ & $   2.42 \cdot 10^{-3} $ & $   1.08 \cdot 10^{-1} $ & $   1.95 \cdot 10^{-3} $ & $   3.27 \cdot 10^{-3} $ & $   2.83 \cdot 10^{-1} $\\
0.6 & $   1.65 \cdot 10^{-3} $ & $   2.48 \cdot 10^{-3} $ & $   9.75 \cdot 10^{-2} $ & $   2.23 \cdot 10^{-3} $ & $   3.52 \cdot 10^{-3} $ & $   2.26 \cdot 10^{-1} $ \\
0.7 & $   1.68 \cdot 10^{-3} $ & $   2.09 \cdot 10^{-3} $ & $   8.46 \cdot 10^{-2} $ & $   2.32 \cdot 10^{-3} $ & $   3.42 \cdot 10^{-3} $ & $   1.85 \cdot 10^{-1} $ \\
0.8 & $   1.64 \cdot 10^{-3} $ & $   1.94 \cdot 10^{-3} $ & $   6.79 \cdot 10^{-2} $ & $   2.36 \cdot 10^{-3} $ & $   3.19 \cdot 10^{-3} $ & $   1.61 \cdot 10^{-1} $ \\
 0.9 & $  1.70 \cdot 10^{-3} $ & $   2.17 \cdot 10^{-3} $ & $   1.02 \cdot 10^{-1} $ & $   2.53 \cdot 10^{-3} $ & $   3.50 \cdot 10^{-3} $ & $   1.63 \cdot 10^{-1} $ \\
1 & $   1.67 \cdot 10^{-3} $ & $   2.11 \cdot 10^{-3} $ & $   8.41 \cdot 10^{-2} $ & $   2.61 \cdot 10^{-3} $ & $   4.05 \cdot 10^{-3} $ & $   1.54 \cdot 10^{-1} $  \\
   \bottomrule
\end{tabular}
\caption{The mean, maximum, and coefficient of variation of the execution times of the tailored implementation of \seq\ and the tailored implementation of \infe\ within DEMKit for the scenario {\sc Medium}.}
\label{tab_result_medium}
\end{table}

\begin{table}[ht!]
\centering
\begin{tabular}{r | r r r | r r r}
\toprule
& \multicolumn{3}{c}{\seq} & \multicolumn{3}{|c}{\infe\ within DEMKit} \\
$s$
& Mean & Max & CoV & Mean & Max & CoV \\
\midrule
0 & $   1.59 \cdot 10^{-3} $ & $   2.15 \cdot 10^{-3} $ & $   7.33 \cdot 10^{-2} $ & $   1.60 \cdot 10^{-3} $ & $   2.76 \cdot 10^{-3} $ & $   2.98 \cdot 10^{-1} $ \\
 0.1 & $  1.58 \cdot 10^{-3} $ & $   1.90 \cdot 10^{-3} $ & $   7.33 \cdot 10^{-2} $ & $   8.93 \cdot 10^{-4} $ & $   2.11 \cdot 10^{-3} $ & $   5.22 \cdot 10^{-1} $ \\
0.2 & $   1.58 \cdot 10^{-3} $ & $   1.94 \cdot 10^{-3} $ & $   8.61 \cdot 10^{-2} $ & $   7.82 \cdot 10^{-4} $ & $   2.04 \cdot 10^{-3} $ & $   4.77 \cdot 10^{-1} $ \\
0.3 & $   1.55 \cdot 10^{-3} $ & $   2.16 \cdot 10^{-3} $ & $   8.79 \cdot 10^{-2} $ & $   7.45 \cdot 10^{-4} $ & $   2.00 \cdot 10^{-3} $ & $   4.66 \cdot 10^{-1} $ \\
 0.4 & $  1.54 \cdot 10^{-3} $ & $   2.05 \cdot 10^{-3} $ & $   7.06 \cdot 10^{-2} $ & $   8.22 \cdot 10^{-4} $ & $   2.30 \cdot 10^{-3} $ & $   5.18 \cdot 10^{-1} $ \\
 0.5 & $  1.54 \cdot 10^{-3} $ & $   2.05 \cdot 10^{-3} $ & $   7.67 \cdot 10^{-2} $ & $   1.02 \cdot 10^{-3} $ & $   2.42 \cdot 10^{-3} $ & $   5.51 \cdot 10^{-1} $\\
 0.6 & $  1.53 \cdot 10^{-3} $ & $   1.83 \cdot 10^{-3} $ & $   6.24 \cdot 10^{-2} $ & $   1.34 \cdot 10^{-3} $ & $   2.63 \cdot 10^{-3} $ & $   4.38 \cdot 10^{-1} $ \\
0.7 & $   1.55 \cdot 10^{-3} $ & $   1.77 \cdot 10^{-3} $ & $   6.22 \cdot 10^{-2} $ & $   1.72 \cdot 10^{-3} $ & $   2.61 \cdot 10^{-3} $ & $   2.75 \cdot 10^{-1} $ \\
0.8 & $   1.53 \cdot 10^{-3} $ & $   1.86 \cdot 10^{-3} $ & $   5.80 \cdot 10^{-2} $ & $   2.09 \cdot 10^{-3} $ & $   2.95 \cdot 10^{-3} $ & $   1.87 \cdot 10^{-1} $ \\
0.9 & $   1.53 \cdot 10^{-3} $ & $   1.94 \cdot 10^{-3} $ & $   6.34 \cdot 10^{-2} $ & $   2.31 \cdot 10^{-3} $ & $   2.98 \cdot 10^{-3} $ & $   1.69 \cdot 10^{-1} $ \\
1 & $   1.52 \cdot 10^{-3} $ & $   1.77 \cdot 10^{-3} $ & $   4.58 \cdot 10^{-2} $ & $   2.51 \cdot 10^{-3} $ & $   3.16 \cdot 10^{-3} $ & $   1.36 \cdot 10^{-1} $ \\
   \bottomrule
\end{tabular}
\caption{The mean, maximum, and coefficient of variation of the execution times of the tailored implementation of \seq\ and the tailored implementation of \infe\ within DEMKit for the scenario {\sc Large}.}
\label{tab_result_large}
\end{table}

Tables~\ref{tab_result_small}-\ref{tab_result_large} show that the mean execution time of \seq\ is similar in each scenario, whereas that of \infe\ appears to decrease as the battery size increases. This implies that also the ratios between the execution times decrease as the battery size increases, which is confirmed by the boxplots in Figure~\ref{plot_DEM}. In particular, a smaller battery size seems to imply that \seq\ is likely to be faster than \infe\, whereas \infe\ is likely to be faster for larger battery size. The reason for this is that the execution time of \infe\ heavily depends on the number of tight nested constraints in an optimal solution. To support this fact, we plot in Figure~\ref{plot_DEM_tight} boxplots of these numbers. Note that when the initial SoC is 20\% or 30\% of the battery capacity in the scenario {\sc Large}, in only 4 of the 50 instances the number of tight constraints was more than 1, meaning that in the remaining 46 instances the optimal solution to the relaxation of the problem did not violate any of the nested constraints. The relation between the number of tight nested constraints and the ratios is also strongly visible when comparing Figures~\ref{plot_DEM} and~\ref{plot_DEM_tight}: the ratios increase as the number of tight constraints increases.

\begin{figure}[ht!]
\centering
\begin{subfigure}[b]{.32\textwidth}
\centering
\includegraphics{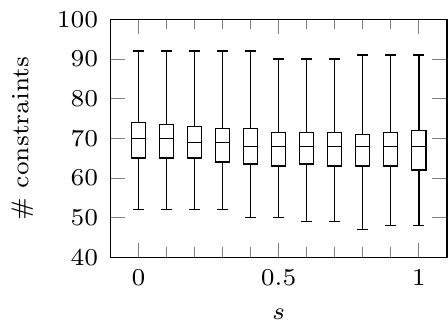}
\caption{{\sc Small}.}
\label{plot_DEM_tight_small}
\end{subfigure}
\hspace{0.005\textwidth}%
\begin{subfigure}[b]{.32\textwidth}
\centering
\includegraphics{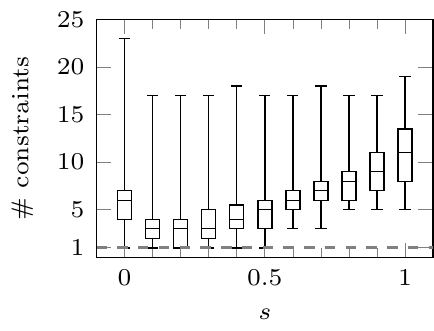}
\caption{{\sc Medium}.}
\label{plot_DEM_tight_Medium}
\end{subfigure}
\hspace{0.005\textwidth}%
\begin{subfigure}[b]{.32\textwidth}
\centering
\includegraphics{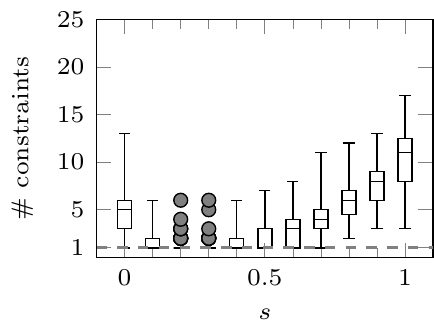}
\caption{{\sc Large}.}
\label{plot_DEM_tight_large}
\end{subfigure}
\caption{Boxplots of the number of tight constraints in the optimal solutions for the three scenarios.}
\label{plot_DEM_tight}
\end{figure}

From these results, we can derive a ``rule of thumb'' for the choice of a proper algorithm to use given the expected number of tight nested constraints. To this end, we compute for each number of tight constraints the percentage of instances where the tailored implementation of \seq\ runs faster than the tailored implementation of \infe\ within DEMKit given the optimal solution has this particular number of tight nested constraints (see Table~\ref{tab_win}). These values suggest that when the number of tight constraints is more than $\frac{4}{192} \approx 2.1$ percent, our algorithm is faster in more than 50\% of the instances.  In particular, when the number of tight constraints is $\frac{7}{192} \approx 3.6$ percent or more, the tailored implementation of our algorithm \seq\ is always faster. 

\begin{table}[ht!]
\centering
\begin{tabular}{r | r r r r r r r}
\toprule
Number of tight nested constraints & 1 & 2 & 3 & 4 & 5 & 6 & $\geq 7$ \\
\midrule
``Win'' percentage & 0.0 & 2.2 & 30.2 & 62.8 & 83.8 & 93.4 & 100 \\
\bottomrule
\end{tabular}
\caption{Percentage of instances where the tailored implementation of \seq\ is faster than the tailored implementation of \infe\ within DEMKit given the number of tight nested constraints in their optimal solutions.}
\label{tab_win}
\end{table}

Note that this rule-of-thumb is in line with the physical interpretation of tight nested constraints in BATTERY. For this, note that a battery being completely empty or full is equivalent to a nested constraint of BATTERY being tight. When the charging rates of the battery are large, the battery is better able to, at a given moment, flatten large peaks or drops in power consumption. However, the latter is also dependent on whether there is enough space (energy) left in the battery to store (dispatch) this energy, which is more likely when the battery capacity is large. Thus, when adopting a large battery for load profile flattening, it is less likely that it will be completely empty or full.

Although the ratio between the execution times of \seq\ and \infe\ appears to depend significantly on the battery size, the maximum and CoV of the execution times of \seq\ is on average around 1.9 and 3.0 times smaller than that of \infe\ respectively. This means that the execution times of \seq\ are on average more stable than those of \infe, regardless of the battery size. For DEM in general and DEMKit in particular, this is beneficial since the coordination and optimization of schedules for different devices is often done in parallel due to the decentralized nature of the coordination (see, e.g., \cite{Hoogsteen2018}). As a consequence, the execution time of the entire coordination and optimization framework is constrained by the maximum execution time required for solving one (subset of) device-level optimization problem(s). Thus, using \seq\ instead of \infe\ within such a framework may significantly reduce the overall execution time of the framework. 

In the following, we present and discuss the results of the scalability evaluation. Figure~\ref{plot_compare} shows the execution times of the three algorithms \seq, \infe, and \dec, and Table~\ref{tab_scale} shows for each studied instance size~$n$ the mean and CoV of the execution times of the corresponding instances. The added regression lines in Figure~\ref{plot_compare} are the fitted power laws of the execution times, i.e., for each algorithm we fit the function $\phi(n) = c_1 \cdot n^{c_2}$ to the execution times. These lines indicate that the practical execution time of both \seq\ and \dec\ is close to $O(n)$ and that of \infe\ is actually slightly less than $O(n)$. Thus, it can be expected that for very large values of $n$, more precisely for $n >2.90 \cdot 10^9$, \infe\ is on average faster than \seq. However, the CoV for \infe\ are around one order of magnitude larger than those of both \seq\ and \dec. This suggests that the execution time of the latter two algorithms is significantly less affected by the choice of problem parameters than \infe. This is in line the results of the comparison of the tailored implementation of \seq\ for BATTERY with that of \infe\ within DEMKit.

\begin{figure}[ht!]
\centering
\includegraphics{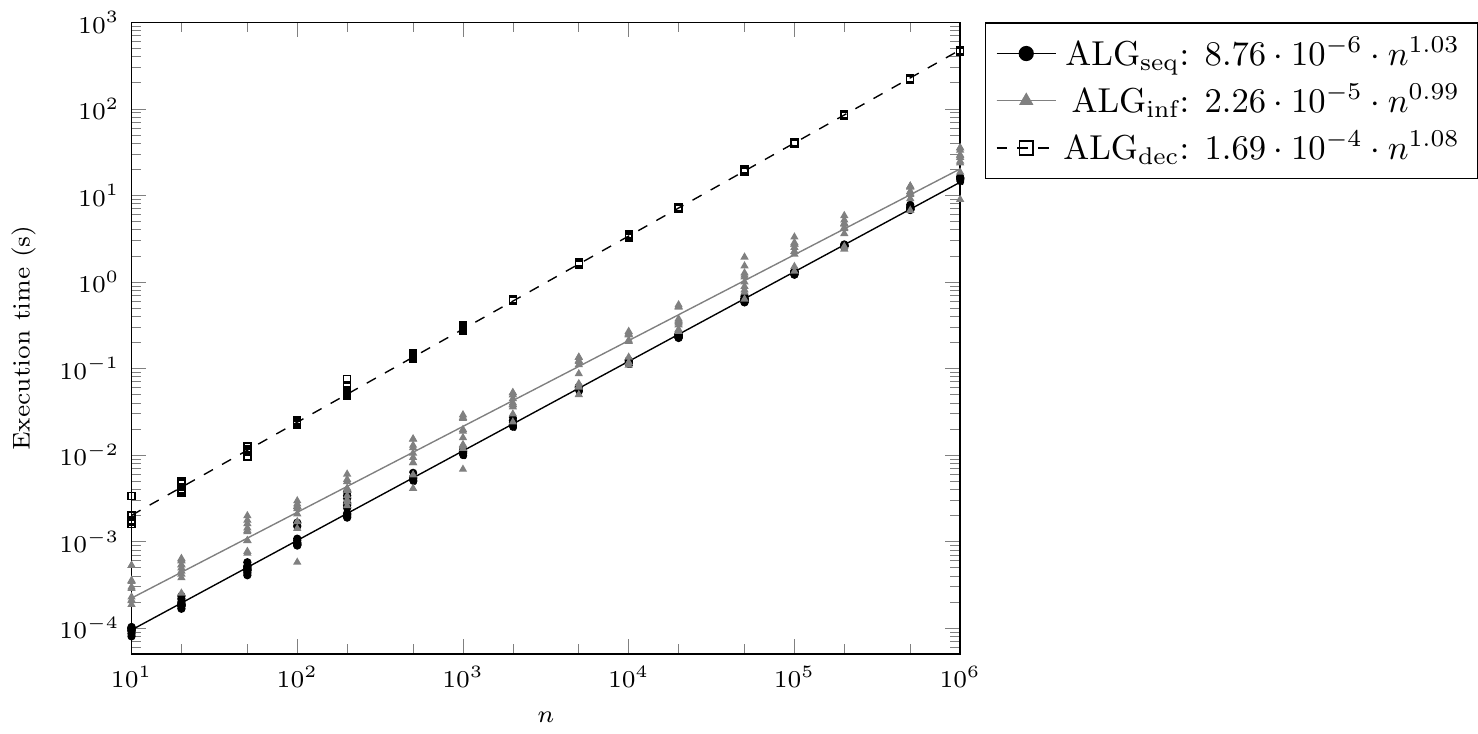}
\caption{Execution times of \seq\ (circles, black), \infe\ (triangles, gray), and \dec\ (squares, open).}
\label{plot_compare}
\end{figure}

\begin{table}[ht!]
\centering
\begin{tabular}{r | lll |lll}
\toprule
& \multicolumn{3}{c}{Mean} & \multicolumn{3}{|c}{CoV} \\
$n$ & \seq & \infe & \dec & \seq & \infe & \dec \\
\midrule
10 &$   9.47 \cdot 10^{-5}$ & $   3.17 \cdot 10^{-4}$ & $   1.90 \cdot 10^{-3}$ & $   7.40 \cdot 10^{-2}$ & $   3.94 \cdot 10^{-1}$ & $   2.74 \cdot 10^{-1}$  \\
20 & $   1.92 \cdot 10^{-4}$ & $   4.92 \cdot 10^{-4}$ & $   4.17 \cdot 10^{-3}$ & $   1.03 \cdot 10^{-1}$ & $   2.40 \cdot 10^{-1}$ & $   1.10 \cdot 10^{-1}$  \\
 50 & $  4.82 \cdot 10^{-4}$ & $   1.30 \cdot 10^{-3}$ & $   1.14 \cdot 10^{-2}$ & $   9.69 \cdot 10^{-2}$ & $   3.21 \cdot 10^{-1}$ & $   8.19 \cdot 10^{-2}$  \\
 100 & $  1.08 \cdot 10^{-3}$ & $   2.14 \cdot 10^{-3}$ & $   2.37 \cdot 10^{-2}$ & $   2.46 \cdot 10^{-1}$ & $   3.37 \cdot 10^{-1}$ & $   3.88 \cdot 10^{-2}$  \\
 200 & $  2.62 \cdot 10^{-3}$ & $   4.01 \cdot 10^{-3}$ & $   5.57 \cdot 10^{-2}$ & $   2.45 \cdot 10^{-1}$ & $   2.75 \cdot 10^{-1}$ & $   1.58 \cdot 10^{-1}$  \\
500 & $   5.29 \cdot 10^{-3}$ & $   1.02 \cdot 10^{-2}$ & $   1.39 \cdot 10^{-1}$ & $   7.11 \cdot 10^{-2}$ & $   3.70 \cdot 10^{-1}$ & $   4.51 \cdot 10^{-2}$  \\
 1,000 & $  1.06 \cdot 10^{-2}$ & $   1.82 \cdot 10^{-2}$ & $   2.89 \cdot 10^{-1}$ & $   4.61 \cdot 10^{-2}$ & $   4.09 \cdot 10^{-1}$ & $   4.56 \cdot 10^{-2}$  \\
 2,000 & $  2.29 \cdot 10^{-2}$ & $   4.05 \cdot 10^{-2}$ & $   6.11 \cdot 10^{-1}$ & $   8.10 \cdot 10^{-2}$ & $   2.33 \cdot 10^{-1}$ & $   1.79 \cdot 10^{-2}$  \\
5,000 & $   5.82 \cdot 10^{-2}$ & $   9.49 \cdot 10^{-2}$ & $   1.63$ & $   3.83 \cdot 10^{-2}$ & $   3.50 \cdot 10^{-1}$ & $   3.10 \cdot 10^{-2}$  \\
 10,000 & $  1.170 \cdot 10^{-1}$ & $   1.91 \cdot 10^{-1}$ & $   3.41$ & $   4.05 \cdot 10^{-2}$ & $   3.38 \cdot 10^{-1}$ & $   2.50 \cdot 10^{-2}$  \\
20,000 & $   2.37 \cdot 10^{-1}$ & $   3.85 \cdot 10^{-1}$ & $   7.18$ & $   3.75 \cdot 10^{-2}$ & $   2.63 \cdot 10^{-1}$ & $   1.60 \cdot 10^{-2}$  \\
 50,000 & $  6.19 \cdot 10^{-1}$ & $   1.12$ & $   1.94 \cdot 10^{01}$ & $   4.93 \cdot 10^{-2}$ & $   3.53 \cdot 10^{-1}$ & $   2.24 \cdot 10^{-2}$  \\
100,000 & $   1.28$ & $   2.35$ & $   4.03 \cdot 10^{01}$ & $   3.20 \cdot 10^{-2}$ & $   2.57 \cdot 10^{-1}$ & $   1.56 \cdot 10^{-2}$  \\
 200,000 & $  2.61$ & $   4.21$ & $   8.40 \cdot 10^{01}$ & $   1.67 \cdot 10^{-2}$ & $   2.59 \cdot 10^{-1}$ & $   1.35 \cdot 10^{-2}$  \\
 500,000 & $  7.07$ & $   1.09 \cdot 10^{01}$ & $   2.22 \cdot 10^{02}$ & $   4.11 \cdot 10^{-2}$ & $   1.76 \cdot 10^{-1}$ & $   1.11 \cdot 10^{-2}$  \\
 1,000,000 & $  1.56 \cdot 10^{01}$ & $   2.66 \cdot 10^{01}$ & $   4.66 \cdot 10^{02}$ & $   3.32 \cdot 10^{-2}$ & $   3.07 \cdot 10^{-1}$ & $   1.29 \cdot 10^{-2}$  \\
   \bottomrule
\end{tabular}
\caption{Mean and coefficient of variation of the execution times.}
\label{tab_scale}
\end{table}

The results in Figure~\ref{plot_compare} and Table~\ref{tab_scale} indicate that on average \seq\ is 27.2 times faster than \dec\ and 1.95 times faster than \infe. With regard to the performance of \dec, we acknowledge that \dec\ and in particular the updating scheme for the single-variable bounds can probably be implemented more efficiently than in the current implementation. To reduce the influence of the overall implementation on the results of this study, we measured the total time that is spent in \dec\ on solving QRAP subproblems and compared this to the execution times of \seq\ and \infe. This alternative time represents the time that is minimally required to solve all QRAP subproblems regardless of the implementation of the scheme used to update the single-variable bounds. These measurements indicate that on average around 59\% of the total execution time of \dec\ is spent on solving QRAP subproblems. However, this time is still on average 15.9 times more than the execution time of \seq\ and 9.5 times more than that of \infe.

\section{Conclusions}
\label{sec_concl}

We proposed an $O(n \log n)$ time algorithm for quadratic resource allocation problems with lower and upper bound constraints on nested sums of variables. As opposed to existing algorithms with the same time complexity, our algorithm can achieve the $O(n \log n)$ time complexity using only basic data structures and is therefore easier to implement. In computational experiments, we demonstrate the good practical performance of our approach, both on synthetic data and on realistic instances from the application area of decentralized energy management (DEM) for smart grids.

Our approach builds upon monotonicity arguments that find their origin in the validity of greedy algorithms for convex optimization problems over polymatroids \cite{Hochbaum1994,Hochbaum1995}. Such monotonicity arguments have been primarily studied for resource allocation problems where the objective function is separable, i.e., can be written as the sum of single-variable functions. However, in previous work \cite{SchootUiterkamp2019a} we prove the validity of similar monotonicity arguments to solve a \emph{nonseparable} resource allocation problem with so-called generalized bound constraints. Moreover, recent results on the use of interior-point methods for nested resource allocation problems \cite{Slager2019,Wright2020} suggest that incorporating specific nonseparable terms in the objective function does not increase the complexity of the used solution method.  Thus, one interesting direction for future research is to investigate whether one can use monotonicity arguments to derive efficient algorithms for resource allocation problems over nested constraints with nonseparable objective functions. 

With regard to the application within DEM systems, we compared our algorithm with an existing implementation of the state-of-the-art algorithm of \cite{vdKlauw2017} within a simulation tool for DEM research. One of our objectives was to decide which of these two algorithm is more suitable to use for a given (type of) problem instance. It would be worthwhile to conduct a more thorough comparison and to develop an automated procedure to decide which algorithm is most likely to be faster. Moreover, the nonseparable version of the studied problem mentioned in the previous paragraph is related to energy management of batteries in three-phase distribution networks, where load profile flattening on all three phases together is required to avoid blackouts in these networks \cite{Weckx2015,Hoogsteen2017b,SchootUiterkamp2019a}. Thus, research in this direction is also relevant in the context of DEM.

\appendix

\section{Proofs of Lemmas~\ref{lemma_mono},~\ref{lemma_chi}, and~\ref{lemma_xn}}

\subsection{Proof of Lemma~\ref{lemma_mono}}
\label{pf_mono}
\begin{lemma_mono}
If $L_j \leq A \leq B \leq U_j$, we have ${x}^j(A) \leq {x}^j(B)$ for a given $j \in \mathcal{N}$.
\end{lemma_mono}
\begin{proof}
For convenience, we include the equality constraint~(\ref{sub_02}) into the nested constraints~(\ref{sub_03}) by replacing these nested constraints by
\begin{equation*}
\tilde{L}_k \leq \sum_{i \in \mathcal{N}_k} x_i \leq \tilde{U}_k, \quad k \in \mathcal{N}_j,
\end{equation*}
where $\tilde{L}_k = L_k$ and $\tilde{U}_k = U_k$ for $k<j$, and $\tilde{L}_j = \tilde{U}_j = C$. The Karush-Kuhn-Tucker (KKT) optimality conditions (see, e.g., \cite{Boyd2004}) for the subproblem QRAP-NC$^j(C)$ are as follows:
\begin{subequations}
\begin{align}
 \frac{x_i}{a_i} + \sum_{k=i}^j (\eta^j_k - \zeta^j_k) + \mu^j_i - \nu^j_i =0, & \quad i \in \mathcal{N}_j, \label{KKT_01} \\
\tilde{L}_k \leq \sum_{k \in \mathcal{N}_i} x_i \leq \tilde{U}_k, &   \quad k\in \mathcal{N}_j, \label{KKT_02} \\
\eta^j_i \left( \tilde{U}_i - \sum_{k \in \mathcal{N}_i} x_k \right) =0, & \quad i\in \mathcal{N}_j, \label{KKT_03} \\
\zeta^j_i \left( \sum_{k \in \mathcal{N}_i} x_k - \tilde{L}_i \right) = 0,& \quad i \in \mathcal{N}_j, \label{KKT_04} \\
\mu^j_i (u_i - x_i)  =0,& \quad i\in \mathcal{N}_j, \label{KKT_05} \\
\nu^j_i (x_i - l_i)  =0,& \quad i\in \mathcal{N}_j, \label{KKT_06} \\
\eta^j_i, \zeta^j_i, \mu^j_i, \nu^j_i  \geq 0,& \quad i\in \mathcal{N}_j. \label{KKT_07}
\end{align} \label{KKT}%
\end{subequations}
Let $(\zeta^j(C),\eta^j(C),\mu^j(C),\nu^j(C))$ denote the Lagrange multipliers corresponding to the optimal solution ${x}^j(C)$. Thus, $({x}^j(C),\zeta^j(C),\eta^j(C),\mu^j(C),\nu^j(C))$ satisfy the KKT-conditions~(\ref{KKT}).

Suppose that there exists an index $s \in \mathcal{N}$ such that ${x}_s^j(A) > {x}_s^j(B)$. Let $r$ be the largest index with $r \leq s$ such that $\sum_{k \in \mathcal{N}_{r-1}} {x}_k^j(A) \geq \sum_{k \in \mathcal{N}_{r-1}} {x}_k^j(B)$, and let $t$ be the smallest index with $t \geq s$ such that $\sum_{k \in \mathcal{N}_t} {x}_k^j(A) \leq \sum_{k \in \mathcal{N}_t} {x}_k^j(B)$. By definition of $r$, $s$, and $t$, we have that
\begin{equation*}
\sum_{i=r}^t {x}_i^j(B) = \sum_{i \in \mathcal{N}_t} {x}_i^j(B) - \sum_{i \in \mathcal{N}_{r-1}} {x}_i^j(B)
\geq  \sum_{i \in \mathcal{N}_t} {x}_i^j(A) - \sum_{i \in \mathcal{N}_{r-1}} {x}_i^j(A)
= \sum_{i=r}^t {x}_i^j(A). 
\end{equation*}
Moreover, observe that we cannot have $r=s=t$ simultaneously. Indeed, if $r=s=t$, then we have by definition of $r$, $s$, and $t$ that
\begin{equation*}
\sum_{k \in \mathcal{N}_s} {x}_k^j(A) \leq \sum_{k \in \mathcal{N}_s} {x}_k^j(B) \leq \sum_{k \in \mathcal{N}_{s-1}} {x}_k^j(A) + {x}_s^j(B).
\end{equation*}
This implies ${x}_s^j(A) \leq {x}_s^j(B)$, which is a contradiction. Thus, either $r<s$ or $s<t$ or both. 

We show that we obtain a contradiction if $r<s$. The proof for the case where $s<t$ is symmetrical. If $r<s$, the following holds:
\begin{itemize}
\item
By definition of $r$ and $s$, we have
\begin{equation*}
\sum_{k \in \mathcal{N}_r} x_k^j(A) < \sum_{k \in \mathcal{N}_r} x_k^j(B) = \sum_{k \in \mathcal{N}_{r-1}} x_k^j(B) + x_r^j(B) \leq \sum_{k \in \mathcal{N}_{r-1}} x_k^j(A) + x_r^j(B).
\end{equation*}
Thus, $x_r^j(A) < x_r^j(B)$.
\item
For each $k$ such that $r \leq k \leq s-1$, we have by definition of $r$ and $s$ and KKT-condition~(\ref{KKT_02}) that
\begin{equation*}
\tilde{L}_k \leq \sum_{i \in \mathcal{N}_k} {x}_i^j(A) < \sum_{i \in \mathcal{N}_k} {x}_i^j(B) \leq \tilde{U}_k.
\end{equation*}
It follows from KKT-conditions~(\ref{KKT_03}),~(\ref{KKT_04}), and (\ref{KKT_07}) that $\zeta_k^j(B) = \eta_k^j(A) = 0$. Thus, for each $r \leq k \leq s-1$, we have
\begin{equation*}
\sum_{i=k}^j (\eta_i^j(A)-\zeta_i^j(A)) - \sum_{i=k+1}^j (\eta_i^j(A)-\zeta_i^j(A))
= \eta_k^j(A)-\zeta_k^j(A)
\leq 0
\end{equation*}
and
\begin{equation*}
\sum_{i=k}^j (\eta_i^j(B)-\zeta_i^j(B)) - \sum_{i=k+1}^j (\eta_i^j(B)-\zeta_i^j(B))
= \eta_k^j(B)-\zeta_k^j(B)
\geq 0.
\end{equation*}
In particular, this implies that
\begin{equation}
\sum_{i=r}^j (\eta_i^j(A)-\zeta_i^j(A)) \leq \sum_{i=s}^j (\eta_i^j(A)-\zeta_i^j(A))
\label{eq_klA}
\end{equation}
and
\begin{equation}
\sum_{i=r}^j (\eta_i^j(B)-\zeta_i^j(B)) \geq \sum_{i=s}^j (\eta_i^j(B)-\zeta_i^j(B)).
\label{eq_klB}
\end{equation}
\item
We have $l_r \leq x_r^j(A) < x_r^j(B) \leq u_r$. It follows from KKT-conditions~(\ref{KKT_05})-(\ref{KKT_07}) that
\begin{equation}
\nu_r^j(B) = \mu_r^j(A) = 0. \label{eq_nmr}
\end{equation}
Similarly, since $l_s \leq x_s^j(B) < x_s^j(A) \leq u_s$, we have by KKT-conditions~(\ref{KKT_05})-(\ref{KKT_07}) that
\begin{equation}
\nu_s^j(A) = \mu_s^j(B) = 0. \label{eq_nms}
\end{equation}
\end{itemize}
We can now derive a contradiction as follows:
\begin{subequations}
\begin{align}
\sum_{i=s}^j (\eta_i^j(A) - \zeta_i^j(A)) &= - \frac{{x}_s^j(A)}{a_s}  -\mu_s^j(A) + \nu_s^j(A) \label{cont_01} \\
& < -\frac{{x}_s^j(B)}{a_s}  -\mu_s^j(B) + \nu_s^j(B) \label{cont_02}\\
&=\sum_{i=s}^j (\eta_i^j(B) - \zeta_i^j(B)) \label{cont_03}\\
& \leq \sum_{i=r}^j (\eta_i^j(B) - \zeta_i^j(B)) \label{cont_04}\\
&=-  \frac{{x}_r^j(B)}{a_r}  -\mu_r^j(B) + \nu_r^j(B) \label{cont_05} \\
&<-  \frac{{x}_r^j(A)}{a_r}  -\mu_r^j(A) + \nu_r^j(A) \label{cont_06}\\
&=\sum_{i=r}^j (\eta_i^j(A) - \zeta_i^j(A)) \label{cont_07}\\
&\leq \sum_{i=s}^j (\eta_i^j(A) - \zeta_i^j(A)). \label{cont_08}
\end{align}
\end{subequations}
Here,
\begin{itemize}
\item
(\ref{cont_01}),~(\ref{cont_03}),~(\ref{cont_05}), and~(\ref{cont_07}) follow from KKT-condition~(\ref{KKT_01});
 \item
(\ref{cont_02}) follows from Equation~(\ref{eq_nms}) and the fact that ${x}_s^j(A) > {x}_s^j(B)$ and $a_s > 0$;
 \item
(\ref{cont_04}) follows from Equation~(\ref{eq_klB});
\item
(\ref{cont_06}) follows from Equation~(\ref{eq_nmr}) and the fact that ${x}_r^j(A) < {x}_r^j(B)$ and $a_s > 0$;
\item
(\ref{cont_08}) follows from Equation~(\ref{eq_klA}).
\end{itemize}
It follows that $x_s^j(A) \leq x_s^j(B)$.
\end{proof}

\subsection{Proof of Lemma~\ref{lemma_chi}}
\label{pf_chi}
\begin{lemma_chi}
We have $\chi^n = \kappa^n = \lambda^n$. Moreover, for each $j \in \mathcal{N}_{n-1}$, we have:
\begin{enumerate}
\item $ \chi^{j+1} \leq \kappa^j$ implies $\sum_{i \in \mathcal{N}_j} {x}_i^n (R) = L_j$ and $\chi^j = \kappa^j$;
\item $\lambda^j \leq \chi^{j+1}$ implies $\sum_{i \in \mathcal{N}_j} {x}_i^n (R) = U_j$ and $\chi^j = \lambda^j$,
\item $\kappa^j < \chi^{j+1} < \lambda^{j}$ implies $L_j < \sum_{i \in \mathcal{N}_j} {x}_i^n (R) < U_j$ and $\chi^j = \chi^{j+1}$.
\end{enumerate}
\end{lemma_chi}
\begin{proof}
We have $\chi^n = \kappa^n = \lambda^n$ since we defined $L_n = U_n = R$ and by definition of the solution ${x}^n(R)$ the nested constraints $L_n \leq \sum_{i \in \mathcal{N}} {x}_i^n(L_n)$ and $\sum_{i \in \mathcal{N}} {x}_i^n(U_n) \leq U_n$ are tight. We prove the lemma by considering each of its three cases separately for each $j < n$:

\begin{enumerate}
\item
We prove this part of the lemma for the case that $j$ is the largest index smaller than ${\ell}_{j+1}$ such that $\chi^{j+1} \leq \kappa^j$, i.e., $\chi^{k+1} > \kappa^k$ for all $k \in \lbrace j +1, \ldots, \ell_{j+1} - 1 \rbrace$. Using this result, we show as follows that the other case, i.e., both the situations where either $j = \ell_{j+1}$ or where there exists an index $k > j$ that it is the largest index in the set $ \lbrace j+1,\ldots, \ell_{j+1} - 1 \rbrace$ such that $\chi^{k+1} \leq \kappa^k$, leads to a contradiction. In the former situation, it follows that $j+1 > j = \ell_{j+1} \geq j+1$, which is a contradiction. In the latter situation, the lemma applies for $k$, meaning that $\sum_{i \in \mathcal{N}_k} {x}_i^n (R) = L_k$ and thus $\ell_{k} = k$. However, we also have by definition of $\ell_{j+1}$ that $\ell_{k} = \ell_{j+1}$ since $j+1 \leq k < \ell_{j+1}$. This implies $k = \ell_{j+1}$, which is a contradiction. 

If $\chi^{j+1} \leq \kappa^{j}$, it follows from the lower breakpoint relations in Equation~(\ref{eq_rel_lower_BP}) that we have either $\alpha_i^{j+1} \geq \kappa^{j} \geq \chi^{j+1}$ (if $\kappa^{j} < \beta_i^{j}$) or $\alpha_i^{j+1} = \beta_i^{j} \leq \kappa^{j} \leq \lambda^{j}$ (if $\beta_i^{j} \leq \kappa^{j}$) for all $i \leq j + 1$. We show that in both cases it holds that ${x}_i^{{\ell}_{j+1}}(V_{{\ell}_{j+1}}) = {x}_i^{j}(L_{j})$:
\begin{itemize}
\item
In the former case, note that $\alpha_i^{k} \leq \alpha_i^{k+1}$ for all $k < n$ by Equation~(\ref{eq_rel_lower_BP}) and that $\chi^{j+1} = \chi^{k}$ for all $k \in \lbrace j+1,\ldots,\ell_{j+1} \rbrace$ by definition of $\chi^{j+1}$. Since $\chi^{k+1} > \kappa^k$ for all $k \in \lbrace j+1,\ldots,\ell_{j+1} - 1 \rbrace$, we have that $\alpha_i^k \geq \alpha_i^{j+1} \geq \chi^{j+1} = \chi^{k+1} > \kappa^k$ for all $k \in \lbrace j+1, \ldots, \ell_{j+1} - 1 \rbrace$. Thus, ${x}_i^{k}(L_{k}) = \bar{l}_i^{k} = {x}_i^{k-1}(L_{k-1})$ for all $k \in \lbrace j+1, \ldots, \ell_{j+1} - 1 \rbrace$, which implies that ${x}_i^j(L_j) = {x}_i^{\ell_{j+1} - 1} (L_{\ell_{j+1} - 1})$. Moreover, note that since $\alpha^{\ell_{j+1}} \geq \alpha^{j+1} \geq \chi^{j+1} = \chi^{\ell_{j+1}}$, we have that ${x}_i^{\ell_{j+1}}(V_{\ell_{j+1}}) = {x}_i^{\ell_{j+1} - 1}(L_{\ell_{j+1} - 1})$. It follows that ${x}_i^{\ell_{j+1}}(V_{\ell_{j+1}}) = {x}_i^{j}(L_{j})$.

\item
The latter case implies that ${x}_i^{j}(L_{j}) = {x}_i^{j}(U_{j}) = \bar{u}_i^{j}$. It follows by Lemmas~\ref{lemma_mono} and \ref{lemma_bound} that ${x}_i^{{\ell}_{j+1}}(V_{{\ell}_{j+1}}) \leq {x}_i^{{\ell}_{j+1}} (U_{{\ell}_{j+1}}) \leq {x}_i^{j} (U_{j}) = {x}_i^{j}(L_{j}) \leq {x}_i^{{\ell}_{j+1}}(L_{{\ell}_{j+1}}) \leq {x}_i^{{\ell}_{j+1}}(V_{{\ell}_{j+1}})$.
\end{itemize}
On the one hand, if $V_{{\ell}_{j+1}} = L_{{\ell}_{j+1}}$, we have
\begin{align*}
L_{j} &= \sum_{i \mathcal{N}_j} {x}_i^{j}(L_{j}) = \sum_{i \in \mathcal{N}_j} {x}_i^{{\ell}_{j+1}}(L_{{\ell}_{j+1}})
= L_{{\ell}_{j+1}} - \sum_{i=j+1}^{{\ell}_{j+1}} {x}_i^{{\ell}_{j+1}}(L_{{\ell}_{j+1}})
\geq
\sum_{i \in \mathcal{N}_{{\ell}_{j+1}}} {x}_i^n(R)
-\sum_{i=j+1}^{{\ell}_{j+1}} {x}_i^{n}(R) \\
&= \sum_{i \in \mathcal{N}_j} {x}_i^{n}(R)
\geq L_{j},
\end{align*}
where the inequality follows since $\sum_{i \in \mathcal{N}_{\ell_{j+1}}} {x}_i^n(R) = L_{\ell_{j+1}}$ and by Lemma~\ref{lemma_bound}. On the other hand, if $V_{{\ell}_{j+1}} = U_{{\ell}_{j+1}}$, we have by Lemma~\ref{lemma_bound} that
\begin{equation*}
L_{j} = \sum_{i \in \mathcal{N}_j} {x}_i^{j}(L_{j}) = \sum_{i \in \mathcal{N}_j} {x}_i^{{\ell}_{j+1}}(U_{{\ell}_{j+1}})
\geq \sum_{i \in \mathcal{N}_j} {x}_i^n(U_n)
\geq L_{j}.
\end{equation*}
In both cases, it follows that $\sum_{i \in \mathcal{N}_j} {x}_i^n (R) = L_{j}$, from which it follows directly that $\chi^{j} = \kappa^{j}$.

\item
The proof for the case $\lambda^{j} \geq \chi^{j+1}$ is analogous to the proof for the case $\chi^{j+1} \leq \kappa^{j}$.

\item

Suppose that ${x}_i^{j} (L_{j}) = {x}_i^n(L_n)$ holds for all $i < j+1$. By Lemma~\ref{lemma_bound}, this implies that ${x}_i^{k}(L_{k}) = {x}_i^{j} (L_{j}) = {x}_i^n (L_n)$ for all $k \in \lbrace j, \ldots,  n \rbrace$ and $i < j+1$. In particular, we have that ${x}_i^{k} (L_{k}) = \bar{l}_i^{k}$ for all $k \in \lbrace j+1,\ldots,n \rbrace$, which implies that $\kappa^{k} \leq \alpha_i^{k}$. Furthermore, note that for any $k' \in \mathcal{N}$ there is at least one index $i_{k'} \leq k$ such that $\alpha_{i_{k'}}^{k'} \leq \kappa^{k'} < \beta_{i_{k'}}^{k'}$. Otherwise, there exists $\epsilon > 0$ such that $\kappa^{k'} + \epsilon$ is an optimal Lagrange multiplier. It follows from the relation between $\alpha_{i_{k'}}^{k'}$ and $\alpha_{i_{k'}}^{k'+1}$ in Equation~(\ref{eq_rel_lower_BP}) that $\alpha_{i_{k'}}^{k'+1} = \kappa^{k'}$ for any $k' < n$. This implies in particular that $\alpha_{i_{k}}^{k+1} = \kappa^{k} \leq \alpha_{i_{k-1}}^{k}$ for all $k \in \lbrace j+1, \ldots, n \rbrace$. It follows that $\kappa^{{\ell}_{j+1}} \leq \alpha_{i_{j}}^{j+1} = \kappa^{j}$ and thus that $\kappa^{{\ell}_j+1} < \chi^{j+1} = \chi^{{\ell}_{j+1}}$. Since $\chi^{{\ell}_{j+1}} \in \lbrace \kappa^{{\ell}_{j+1}}, \lambda^{{\ell}_{j+1}} \rbrace$, we have $\chi^{{\ell}_{j+1}} = \lambda^{{\ell}_{j+1}}$, from which it follows that $\sum_{i \in \mathcal{N}_{{\ell}_{j+1}}} {x}_i^{n} (R) = U_{{\ell}_{j+1}}$. However, this implies that
\begin{equation*}
\sum_{i \in \mathcal{N}_{{\ell}_{j+1}}} {x}_i^{{\ell}_{j+1}} (L_{{\ell}_{j+1}})
=
\sum_{i \in \mathcal{N}_{{\ell}_{j+1}}} {x}_i^{{\ell}_{j+1}} (R)
=
U_{{\ell}_{j+1}}
\geq
\sum_{i \in \mathcal{N}_{{\ell}_{j+1}}} {x}_i^{{\ell}_{j+1}} (U_{{\ell}_{j+1}})
\geq
\sum_{i \in \mathcal{N}_{{\ell}_{j+1}}} {x}_i^{{\ell}_{j+1}} (L_{{\ell}_{j+1}}).
\end{equation*}
This implies that $\sum_{i \in \mathcal{N}_{{\ell}_{j+1}}} {x}_i^{{\ell}_{j+1}} (L_{{\ell}_{j+1}})  = \sum_{i \in \mathcal{N}_{{\ell}_{j+1}}} {x}_i^{{\ell}_{j+1}} (U_{{\ell}_{j+1}})$, from which it follows that $L_{{\ell}_{j+1}} = U_{{\ell}_{j+1}}$ by the monotonicity of ${x}^{\ell_{j+1}}(\cdot)$ as proven in Lemma~\ref{lemma_mono}. However, this is a contradiction with the assumption that $L_{k} < U_{k}$ for all $k < n$. Hence, there must be at least one index $i'$ such that ${x}_{i'}^{j} (L_{j})< {x}_{i'}^n (R)$. It follows that $L_{j} = \sum_{i \in \mathcal{N}_j} {x}_i^{j}(L_{j}) < \sum_{i \in \mathcal{N}_j} {x}_i^n (R)$.

To prove that $\sum_{i \in \mathcal{N}_j} {x}_i^n(R) < U_{j}$, we can use a similar argument wherein we show that the proposition ${x}_i^{j}(U_{j}) = {x}_i^n(R)$ cannot be true for all $i < n$. Together, this implies that $L_{j} < \sum_{i \in \mathcal{N}_j} {x}_i^n(R) < U_{j}$, from which it follows directly that $\chi^{j} = \chi^{{\ell}_{j+1}} = \chi^{j+1}$.
\end{enumerate}
\end{proof}

\subsection{Proof of Lemma~\ref{lemma_xn}}
\label{pf_xn}

\begin{lemma_xn}
For each $i \in \mathcal{N}$, we have
\begin{equation*}
{x}_i^n (R) =
\begin{cases}
l_i & \text{if } \chi^i < \alpha_i^i, \\
a_i \chi^i & \text{if } \alpha_i^i \leq \chi^i < \beta_i^i, \\
u_i & \text{if } \beta_i^i \leq \chi^i.
\end{cases}
\end{equation*}
\end{lemma_xn}%

\begin{proof} 
Let $\mathcal{J}$ denote the set of indices whose corresponding nested lower or upper constraint is tight in ${x}^n (R)$. More precisely,
\begin{equation*}
\mathcal{J}
:= \lbrace k_j \ | \ j \in \mathcal{N} \rbrace \equiv \lbrace j_1,\ldots,j_q \rbrace,
\end{equation*}
where $q := |\mathcal{J}|$ and $j_1 < \cdots < j_q$. For a given $p \in \lbrace 1,\ldots, q \rbrace$, note that since either the lower or upper nested constraint corresponding to $j_p$ is tight in the solution ${x}^n(R)$, we have that $\sum_{i \in \mathcal{N}_{j_p}} {x}_i^n(R) = V_{j_p}$. This implies that the vector $({x}_i^n(R))_{1 \leq i \leq j_p}$ is the optimal solution to the subproblem QRAP-NC$^{j_p}(V_{j_p})$, i.e., to the problem
\begin{align}
\text{QRAP-NC}^{j_p}(V_{j_p}) \ : \ 
\min_{x \in \mathbb{R}^{j_p}} \
 &\sum_{i \in \mathcal{N}_{j_p}} \frac{1}{2} \frac{x_i^2}{a_i}\nonumber \\
\text{s.t. } & \sum_{i \in \mathcal{N}_{j_p}} x_i = V_{j_p}, \nonumber \\
& L_k \leq \sum_{i \in \mathcal{N}_k} x_i \leq U_k, \quad k \in \lbrace 1,\ldots, j_p - 1 \rbrace, \label{QRAP_NC_02_03} \\
& l_i \leq x_i \leq u_i, \quad i \in \lbrace 1,\ldots,j_p \rbrace. \nonumber
\end{align}
Note that in the optimal solution $({x}_i^n(R))_{ i \in \mathcal{N}_{j_p}}$ to this problem, none of the nested constraints~(\ref{QRAP_NC_02_03}) for $k$ with $j_{p-1} < k < j_p$ are tight. As a consequence, when deriving the reformulated equivalent problem QRAP$^{j_p}(V_{j_p})$, it follows from Lemmas~\ref{lemma_bound} and~\ref{lemma_add} that we may replace the single-variable bounds~(\ref{sub_alt_03}) for $i$ with $j_{p-1} < i < j_p$ by the original variable bounds $l_i \leq x_i \leq u_i$. Thus, we can reformulate QPRAP-NC$^{j_p} (V_{j_p})$ to
\begin{align*}
\text{QRAP}^{j_p}(V_{j_p}) \ : \ 
\min_{x \in \mathbb{R}^j} \
&\sum_{i \in \mathcal{N}_{j_p}} \frac{1}{2} \frac{x_i^2}{a_i} \\
\text{s.t. } & \sum_{i \in \mathcal{N}_{j_p}} x_i = V_{j_p},  \\
& {x}_i^{j_p-1}(L_{j_p-1}) \leq x_i \leq {x}_i^{j_p-1}(U_{j_p-1}), \quad i \in \lbrace 1,\ldots, j_{p-1} \rbrace,  \\
&l_j \leq x_j \leq u_j, \quad i \in \lbrace j_{p-1}+1,\ldots,j_p \rbrace.
\end{align*} 
Recall that $\chi^{j_p}$ is the optimal Lagrange multiplier for this problem. As a consequence, we can directly compute ${x}_i^{j_p} (R)$ for $i \in \lbrace j_{p-1}+1,\ldots,j_p \rbrace$ using Equation~(\ref{eq_opt_QRA}):
\begin{equation*}
{x}_i^n (R) = {x}_i^{j_p} (V_{j_p})= \begin{cases}
l_i & \text{if } \chi^{j_p} < \alpha_i^i, \\
a_i \chi^{j_p} & \text{if } \alpha_i^i \leq \chi^{j_p} < \beta_i^i, \\
u_i & \text{if } \beta_i^i \leq \chi^{j_p}.
\end{cases}
\end{equation*}
The result of the lemma follows since we have $\chi^{j_p} = \chi^i $ for each $i \in \lbrace j_{p-1},\ldots, j_p \rbrace$ by definition of $j_p$ and $\chi^i$.
\end{proof}

\section*{Acknowledgment}
This research has been conducted within the SIMPS project (647.002.003) supported by NWO and Eneco.

\bibliographystyle{abbrv}

\bibliography{library_nested}

\end{document}